\tikzset{individu/.style={draw,thick}}
\newcounter{dummy} \numberwithin{dummy}{section}
\newtheorem{theorem}[dummy]{Theorem}
\newtheorem{lemma}[dummy]{Lemma}
\newtheorem{proposition}[dummy]{Proposition}
\newtheorem{remark}[dummy]{Remark}
\def\MR#1{\href{http://www.ams.org/mathscinet-getitem?mr=#1}{MR#1}}
\numberwithin{equation}{section}
\newcommand\mathof[1]{{\operator@font#1}} \makeatother
\newcommand*\diff{\mathop{}\!\mathrm{d}}
\title{How long does it take to train an Elephant \\ Random Walk}
\author{Zheng Fang\thanks{Institute of Mathematics, University of Zurich, Zurich, Switzerland. \href{mailto: zheng.fang@math.uzh.ch}{zheng.fang@math.uzh.ch}}}
\date{}
\begin{document}

\maketitle

\begin{abstract}
   We study how conditioning on the first $k$ steps, which we think of as training, affects the long-term behavior of the Elephant Random Walk. When the elephant is conditioned to be at position $k$ at time $k$, the first return time to the origin scales as $k^{(4-4p)/(3-4p)}$ in the diffusive regime, and grows exponentially in the critical regime. We loosely interpret this as a measurement of the rate at which the elephant forgets its training.
\end{abstract}

\section{Introduction} \label{section:introduction}
\subsection{Motivation}\label{subsection: motivation}
Imagine at a circus, we watch as an elephant performs a series of impressive tricks under the guidance of its mahout, a person who controls and cares for the elephant. This naturally leads us to wonder: how long does it take for the elephant to be trained so that its behavior becomes distinguishable from that of an untrained one? Building on this idea of training and memory, we study the Elephant Random Walk (ERW), a memory-dependent random walk introduced by Schütz and Trimper in \cite{elephantsalwaysremember}. Our main purpose is to gain some insights into how training changes its behavior and makes it possible to distinguish trained walks from untrained ones. In what follows, we say that an elephant is \textit{trained} once the effect of training becomes observable in its long-time behavior.

The ERW is a one-dimensional discrete-time nearest-neighbor process taking values in $\mathbb{Z}$ with complete memory of its past. For the \textit{untrained} ERW $(S(n))_{n \geq 0}$, we set $S(0) = 0$, and define $S(n) := \sum_{i=1}^{n} X_i$, where each $X_i \in \{-1, 1\}$ is the $i$-th step. The walk is driven by a memory parameter $p \in [0, 1]$. For $n \geq 2$, a past time $u(n) \in \{1, \ldots, n-1\}$ is chosen uniformly at random. With probability $p$, the walker repeats the step at the chosen time $u(n)$; otherwise, it repeats the opposite direction. When $p = 1/2$, the process is a simple random walk. We say the elephant is trained for $k$ steps, which will be denoted as $(S^{(k)}(n))_{n\geq 0}$, when the first $k$ steps $X_1, \ldots, X_k$ are fixed, the case we often consider hereafter is $X_1 = \ldots = X_k = 1$. And we are naturally interested in regime where $k,n\rightarrow\infty$ with $k\ll n$. That is, the duration of training is small compared to the time at which we observe the elephant. 

It is well known that the asymptotic behavior of the ERW, both trained (with a fixed number of initial steps) and untrained, exhibits a phase transition at $p=3/4$. For $p<3/4$, the walk is in the diffusive regime (scaling exponent below $1/2$), while for $p>3/4$, it is in the superdiffusive regime (scaling exponent $2p-1>1/2$). It has been stressed in \cite{connectiontourns} that this phase transition mirrors that for a two-color P\'olya-type urn with mean replacement matrix
\begin{equation*}
    R=
\begin{pmatrix}
    p & 1-p\\
    1-p & p
\end{pmatrix}
\end{equation*}
which has two eigenvalues $\lambda_1=1$ and $\lambda_2=2p-1$. It is well-known that the asymptotic behavior of such urn model depends on whether the second eigenvalue of the mean replacement matrix is lower or higher than half of the first eigenvalue, which points to the phase transition for the ERW.

The asymptotic behavior of the ERW in different regimes is well-known. For detailed analysis, see \cite{{connectiontourns},  {martingaleapproach}, {universality}, {CLT}, {invariance}, {recurrencefor1dERW}, {fixpoint}, {superdiffusivelaw}, {correlatedBernoulli}, {superdiffusiveERW}, {LucilePhDthesis}}. In the superdiffusive regime (for $p>3/4$), we have 
\begin{equation*}\label{intro: superdiffusive result}
    \frac{S(n)}{n^{2p-1}}\xrightarrow[n\rightarrow\infty]{a.s} L
\end{equation*}
where $L$ is a non-degenerate random variable, which has mean zero when there has been no training. However, even if we only train the elephant to move into $+1$ in the first step, denoting this elephant by $S^{(1)}$, then the corresponding limiting random variable $L^{(1)}$ has a positive mean. This shows that the superdiffusive ERW is trained already after the first step. In the diffusive regime (i.e., $p<3/4$), when the training is absent, the same rescaling applies for every memory parameter $p<3/4$: the rescaled process $\bigl(n^{-1/2} S(\lfloor nt \rfloor)\bigr)_{t \geq 0}$ converges in distribution as $n \to \infty$, in the sense of Skorokhod. More specifically, we have:
\begin{equation}\label{intro diffusive ERW}
    \left(\frac{S(\lfloor nt \rfloor)}{\sqrt{n}}\right)_{t\geq 0}  \xrightarrow[n\rightarrow\infty]{(d)} \left(\hat{B}(t)\right)_{t\geq 0}
\end{equation}
where the limiting process on the right-hand side is the so-called noise-reinforced Brownian motion with memory parameter $p$, a centered Gaussian process whose covariance function is given by
\begin{equation*}
    \mathbb{E}(\hat{B}(t)\hat{B}(s))=t^{2p-1}s^{2-2p}/(3-4p) \quad \text{$\forall 0 \leq s \leq t$}
\end{equation*}

On the other hand, the spectral gap plays an important role in the mixing properties in the setting of the Markov chains on a finite space. Informally, the larger the spectral gap is, the faster the Markov chain "forgets" its initial value; see e.g. {\cite[Chapter 12.2]{Levin-Peres}}. It is natural to expect that, for the ERW, a smaller memory parameter $p$ corresponds to a larger spectral gap $2-2p$, meaning the elephant forgets its initial training more quickly. In particular, this suggests that in the diffusive regime, the spectral gap should play a more visible role in determining the asymptotic behavior. The motivation of the current work is to make this intuition rigorous and perform a quantitative study.

The present work is also motivated by \cite{ChrisDean}, where functional limit theorems are established for balanced Pólya urns in regimes where both the number of draws and the initial composition tend to infinity. Through the standard correspondence between the elephant random walk and a two-color Pólya-type urn, the trained ERW considered here falls within the so-called Time-Step Dominant Small Urns regime in \cite{ChrisDean}. Accordingly, Proposition \ref{functional convergence for rescaled and conditioned ERW} should be understood primarily as an ERW-specific reformulation of \cite[Theorem3.1]{ChrisDean}. Likewise, the martingale constructions used in our proofs rely on techniques that are by now standard in the ERW literature. The genuinely new contribution of the present work is the extraction of the asymptotic behavior of the first return time to the origin for the trained ERW under growing initial conditioning. More precisely, we identify the critical training scales in the diffusive and critical regimes and derive the corresponding scaling limits and limit laws for this stopping time. In this way, the paper complements the results of Dean and extends the return-to-zero results of \cite{countingzeros} and \cite{criticalERW}, which deal with the first-return problem for the untrained ERW.


\begin{figure}
  \centering
  \includegraphics[scale = 0.55]{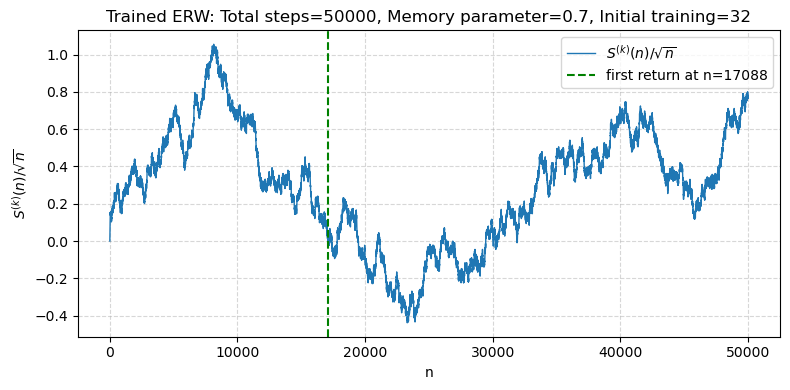}       
  \vspace{0.15cm}
  \includegraphics[scale = 0.55]{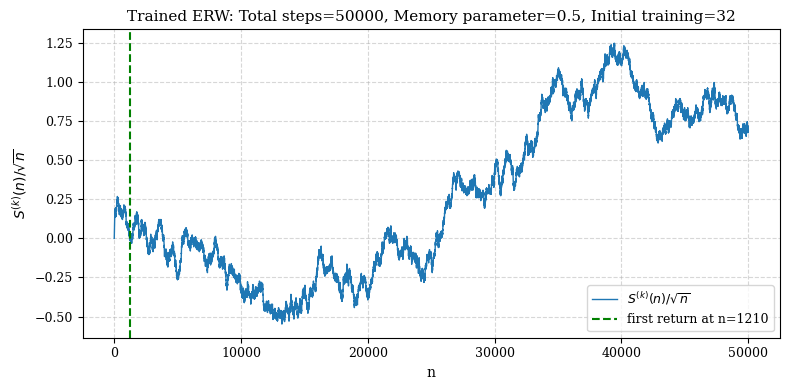}    
  \vspace{0.15cm}
  \includegraphics[scale = 0.55]{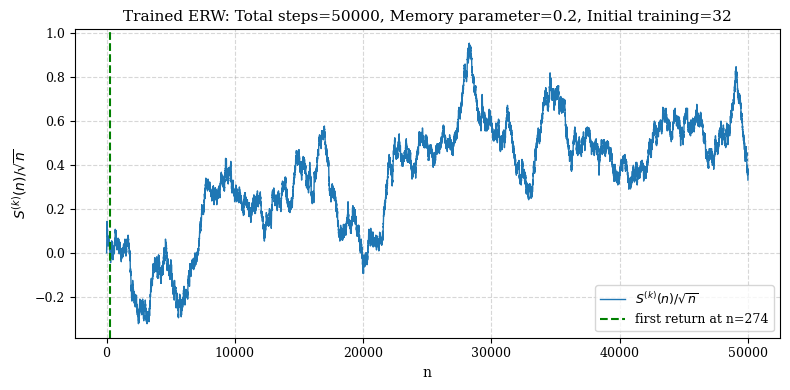}     
  \caption{We now present simulation results for the trained ERW in the diffusive regime after rescaling. Each plot shows a trajectory of $50,000$ steps with memory parameters $p = 0.7$, $0.5$, and $0.2$, and an initial training period fixed at $k=32$. In the first figure, we observe a clear long excursion away from the origin before the docile elephant (when $p>0.5$) returns for the first time (marked by the green dotted line). This excursion is much longer than $32^2 = 1024$, the asymptotic first return time for $p = 0.5$, illustrated in the middle plot for the classical simple random walk case. In contrast, in the last figure, the rebellious elephant (when $p<0.5$) returns even quicker as expected.}
  \label{figure 1}
\end{figure}

In the diffusive regime, we argue that the \textit{critical training phase} for the elephant occurs when $k = k(n) \sim n^{(3-4p)/(4-4p)}$. In other words, one must train the elephant for roughly $n^{(3-4p)/(4-4p)}$ steps to observe noticeable differences from an untrained elephant at time $n$. Whereas, if $k(n)\gg n^{(3-4p)/(4-4p)}$, the effect of training becomes evident. Here is a more precise statement.

\begin{proposition}\label{functional convergence for rescaled and conditioned ERW}
    In the diffusive regime, $p\in[0,3/4)$, if 
    \begin{equation}\label{critical training phase}
        k(n)\sim (3-4p)^{-1/(4-4p)}n^{(3-4p)/(4-4p)} \; \text{as $n\rightarrow\infty$}
    \end{equation}
    then the following weak convergence holds in the Skorokhod topology: for each pair of $0<\epsilon\leq l <\infty$,
\begin{equation}\label{ERW scaling limit diffusive 1}
    \left(\frac{S^{(k(n))}\left(\lfloor{nt}\rfloor\right)}{\sqrt{n}}\right)_{t\in[\epsilon,l] }\xrightarrow[n\rightarrow\infty]{(d)} \left(\hat{B}(t)+\frac{t^{2p-1}}{\sqrt{3-4p}}\right)_{t\in[\epsilon,l] }
\end{equation}
where $(\hat{B}(t))_{t\geq 0}$ is the noise-reinforced Brownian motion with parameter $p$. We emphasize that the convergence in (\ref{ERW scaling limit diffusive 1}) is valid for any positive compact interval bounded away from $0$.
\end{proposition}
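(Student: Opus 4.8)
The idea is to peel off the deterministic conditional mean of $S^{(k(n))}$, show that the critical scaling \eqref{critical training phase} makes it converge to the drift $t\mapsto t^{2p-1}/\sqrt{3-4p}$, and show that the residual martingale converges to the noise‑reinforced Brownian motion; adding the two yields \eqref{ERW scaling limit diffusive 1}. Write $k=k(n)$.

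\emph{Martingale decomposition.} Since $\mathbb E[X_{n+1}\mid\mathcal F_n]=(2p-1)S^{(k)}(n)/n$ for all $n\geq1$, the sequence $b_n:=\prod_{j=1}^{n-1}\bigl(1+\tfrac{2p-1}{j}\bigr)^{-1}$ — which satisfies $b_n/n^{1-2p}\to c_p$ for a positive constant $c_p$ that will cancel everywhere below — makes $M_n:=b_n\,S^{(k)}(n)$ a martingale for $n\geq k$, with deterministic start $M_k=b_k\,k$, increments $\Delta M_{n+1}=b_{n+1}\bigl(X_{n+1}-\mathbb E[X_{n+1}\mid\mathcal F_n]\bigr)$ (so $|\Delta M_{n+1}|\leq2b_{n+1}$), and bracket $\langle M\rangle_n-\langle M\rangle_k=\sum_{j=k}^{n-1}b_{j+1}^2\bigl(1-(2p-1)^2(S^{(k)}(j)/j)^2\bigr)$. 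Using $S^{(k)}(m)=M_k/b_m+(M_m-M_k)/b_m$ and $b_{\lfloor nt\rfloor}\sim c_p(nt)^{1-2p}$ uniformly on $[\epsilon,l]$ (valid once $\lfloor n\epsilon\rfloor\geq k$, which holds for large $n$ since $k=o(n)$),
\begin{equation*}
\frac{S^{(k)}(\lfloor nt\rfloor)}{\sqrt n}=D_n(t)+G_n(t),\qquad D_n(t):=\frac{b_k\,k}{b_{\lfloor nt\rfloor}\sqrt n},\qquad G_n(t):=\frac{M_{\lfloor nt\rfloor}-M_k}{b_{\lfloor nt\rfloor}\sqrt n}.
\end{equation*}
Hypothesis \eqref{critical training phase} gives $k^{2-2p}\sim(3-4p)^{-1/2}n^{(3-4p)/2}$, and feeding this (with the asymptotics of $b$) into $D_n$ makes all powers of $n$ and the constant $c_p$ cancel, so $D_n(t)\to t^{2p-1}/\sqrt{3-4p}$ uniformly on $[\epsilon,l]$.

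\emph{Functional CLT for $G_n$.} Write $G_n(t)=(1+o(1))\,t^{2p-1}W_n(t)$, $W_n(t):=(M_{\lfloor nt\rfloor}-M_k)/(c_p\,n^{3/2-2p})$, a martingale in $t$. Its jumps obey $|\Delta W_n|=O(j^{1-2p}/n^{3/2-2p})=O(n^{-1/2})$ uniformly for $k\leq j\leq nl$, so the Lindeberg condition is automatic; its bracket is $\langle W_n\rangle_t=c_p^{-2}n^{-(3-4p)}\sum_{j=k}^{\lfloor nt\rfloor-1}b_{j+1}^2\bigl(1-(2p-1)^2(S^{(k)}(j)/j)^2\bigr)$, whose main term tends deterministically to $t^{3-4p}/(3-4p)$ (here $2-4p>-1$ and $k^{3-4p}=o(n^{3-4p})$ are used). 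The correction term vanishes: from the martingale identity $\mathbb E[M_j^2]=\mathbb E[M_k^2]+\mathbb E[\langle M\rangle_j]=O(k^{4-4p}+j^{3-4p})$ one gets $\mathbb E[(S^{(k)}(j)/j)^2]=O(k^{4-4p}j^{4p-4}+j^{-1})$, hence
\begin{equation*}
\frac{1}{n^{3-4p}}\sum_{j=k}^{nl}b_{j+1}^2\,\mathbb E\bigl[(S^{(k)}(j)/j)^2\bigr]=O\!\left(\frac{k^{3-4p}}{n^{3-4p}}\right)+O\!\left(\frac{1}{n^{3-4p}}\sum_{j=k}^{nl}j^{1-4p}\right)\longrightarrow0,
\end{equation*}
the second summand treated by separating $p<\tfrac12$, $p=\tfrac12$, $p>\tfrac12$. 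Thus $\langle W_n\rangle_t\to t^{3-4p}/(3-4p)$ in probability for each $t$, and a functional martingale CLT yields $W_n\Rightarrow W$ in $D([\epsilon,l])$ with $W(t)=B\bigl(t^{3-4p}/(3-4p)\bigr)$ for a standard Brownian motion $B$. Multiplying by the deterministic continuous factor $t^{2p-1}$ (continuous mapping) gives $G_n\Rightarrow(t^{2p-1}W(t))_{t\in[\epsilon,l]}$, a centered Gaussian process with covariance $s^{2p-1}t^{2p-1}\min\{s^{3-4p},t^{3-4p}\}/(3-4p)=s^{2-2p}t^{2p-1}/(3-4p)$ for $s\leq t$ — exactly the law of $\hat B$ on $[\epsilon,l]$. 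Adding the uniform deterministic limit of $D_n$ then completes the proof. (Alternatively, in the urn picture the conditioned walk after time $k$ is the two‑colour urn started from $k$ balls of one colour and run for $n-k$ further draws, and the fluctuation part is the Time‑Step Dominant Small Urns regime of \cite{Chris Dean}.)

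\emph{Main obstacle.} The crux is the convergence of $\langle W_n\rangle$: showing the nonlinear correction $(2p-1)^2(S^{(k)}(j)/j)^2$ is negligible uniformly over $k\leq j\leq nl$. This is exactly where the precise exponent in \eqref{critical training phase} is essential (it forces both $k=o(n)$ and $k^{3-4p}=o(n^{3-4p})$), and where the three sub‑ranges of $p$ must be separated. Two minor points: $W_n$ is a triangular‑array martingale with an $n$‑dependent filtration, so one invokes the Rebolledo/Jacod--Shiryaev form of the functional martingale CLT; and the restriction to intervals bounded away from $0$ is genuine, since for $p<\tfrac12$ the limiting drift blows up as $t\downarrow0$ while $S^{(k)}(\lfloor nt\rfloor)/\sqrt n\leq k/\sqrt n\to0$ for $t<k/n$.
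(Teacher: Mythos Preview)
Your proof is correct and follows essentially the same route as the paper: the same multiplicative martingale $b_nS^{(k)}(n)$ (the paper's $a_j$ equals your $b_j$ up to a constant), the same drift/fluctuation split, and the same application of a martingale FCLT to identify the fluctuation with $t^{2p-1}B(t^{3-4p}/(3-4p))=\hat B(t)$. The only notable differences are cosmetic: the paper works with the optional bracket $[M,M]$ and disposes of the $(S^{(k)}(j)/j)^2$ correction by citing the ERW law of large numbers, whereas you use the predictable bracket and an explicit second-moment bound --- your version is in fact slightly more careful, since the LLN citation glosses over the triangular-array nature of the problem.
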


We now dwell on a simple observation about the right-hand side of (\ref{ERW scaling limit diffusive 1}), which is at the origin of this study. The additional correction term of order $t^{2p-1}$ in (\ref{ERW scaling limit diffusive 1}), compared to (\ref{intro diffusive ERW}), comes from the training we imposed on the elephant. We note that if $p<1/2$, this correction term vanishes as $t$ grows larger. We refer to such an elephant as rebellious, as it quickly forgets its initial training. At the opposite for $p>1/2$, the correction term diverges as $t\rightarrow\infty$, but remains negligible compared to the standard deviation of the noise-reinforced Brownian motion. In this case, the elephant retains the influence of its training for a long time, and we call it docile. Since $\hat{B}$ is recurrent at the origin, the previous argument implies that, for every $p \in (0, 3/4)$, the limiting process in (\ref{ERW scaling limit diffusive 1}) also remains recurrent at the origin. 

Now recall, the law of the iterated logarithm for the noise-reinforced Brownian motion (see Equation (4) in \cite{universality})

\begin{equation*}\label{E:LIL} 
\limsup_{t\to 0+} \frac{|\hat B(t)|}{\sqrt{2t \ln \ln (1/t)}} = \frac{1}{\sqrt{3-4p}} \qquad \text{a.s.}
\end{equation*}
shows that for small times, the noise-reinforced Brownian motion is dominated by the correction term. 

We define an excursion away from the origin as a segment of the path between two successive visits to zero. With this definition, the continuity of the limiting process in (\ref{ERW scaling limit diffusive 1}), combined with the law of the iterated logarithm for the noise-reinforced Brownian motion, implies that when $p \in (1/2, 3/4)$, the limiting process in (\ref{ERW scaling limit diffusive 1}) immediately makes a first \textbf{excursion} away from the origin of strictly positive length. This motivates our study of its asymptotic behaviour in Theorem \ref{first return time for original ERW}. In contrast, when $p<1/2$, the vanishing correction term leads to such limiting process having an instantaneous return to the origin from infinity. We also notice the previous conclusions fit the trend described in Figure \ref{figure 1}.

Moreover, when the initial training time $k=k(n)$ is negligible relative to the critical training phase, namely, $k(n) \ll  n^{(3-4p)/(4-4p)}$, the correction term vanishes in the limit and the process reduces to that of (\ref{intro diffusive ERW}). 

\begin{remark}
We also note that when $p\in (1/2, 3/4)$, the limiting process on the right-hand side of (\ref{ERW scaling limit diffusive 1}) satisfies the stochastic differential equation,
\begin{equation} \label{intro sde}                    
\diff X(t)= \diff B(t) + \frac{2p-1}{t} X(t) \diff t \qquad \text{ with\, } X(0)=0 
\end{equation}
indicating that the solution to this SDE is not unique, since $\hat{B}$ is also a solution to (\ref{intro sde}) (see \cite{universality} for further details). 
\end{remark}

\subsection{Main Result}


The main purpose of this work is to investigate the asymptotic behavior of the first return time to the origin, $T^{(k)}$, for the trained ERW, $(S^{(k)}(n))_{n\geq 0}$, which is one way of measuring the persistence of the memory of the training. Of course, the elephant does not suddenly lose its entire memory at time $T^{(k)}$ (for instance, the first step is always recalled infinitely often). However, the steps $k+1,\, k+2,\, \dots$ after the initial training are heavily unbalanced \footnote{Here "unbalanced" means that the conditional distribution of the next step is asymmetric, i.e., $\mathbb{P}(X^{(k)}_{n+1}=\pm 1\mid\mathcal F_n)\neq 1/2$.}, whereas $T^{(k)}$ is the first time at which the elephant has the same probability $1/2$ of making an upward or downward step. More precisely, for every $k\in\mathbb{N}$, this return time is defined as
\begin{equation}\label{definition of return time diffusive}
    T^{(k)}:=\inf\{n\geq k: S^{(k)}(n)=0\}
\end{equation}


\begin{remark}
   Another natural quantity for measuring the persistence of the initial training is the asymptotic growth rate of the conditional displacement of the elephant, namely
   \begin{equation*}
   m_n:=\mathbb{E}\left(S^{(k)}(n)\mid S^{(k)}(k)=k\right), \quad n\geq k
   \end{equation*}
   Using
   \begin{equation*}
   \mathbb{E}\left(X^{(k)}_{n+1}\mid \mathcal{F}_n\right)= \frac{2p-1}{n}S^{(k)}(n)
   \end{equation*}
   we obtain
   \begin{equation*}
   \mathbb{E}\left(S^{(k)}(n+1)\mid \mathcal{F}_n\right)=\left(1+\frac{2p-1}{n}\right)S^{(k)}(n)
   \end{equation*}
   Taking the expectation yields the recursion formula
   \begin{equation*}
   m_{n+1}=\left(1+\frac{2p-1}{n}\right)m_n, \quad \text{with } m_k=k
   \end{equation*}
   Therefore, together with Stirling's formula
   \begin{equation*}
   \mathbb{E}\left(S^{(k)}(n)\mid S^{(k)}(k)=k\right)=k\prod_{j=k}^{n-1}\left(1+\frac{2p-1}{j}\right)\sim k^{2-2p}n^{2p-1}\quad \text{as } n\to\infty
   \end{equation*}
   In particular, this indicates that the effect of the initial training fades when $p<1/2$, as the above conditional displacement tends to $0$ as $n\to\infty$. By contrast, the effect of the training persists when $p>1/2$, since the conditional displacement diverges as $n\to\infty$.
   
   We stress that this point of view is much less precise than the approach based on the first return time. It does not determine when the effect of the initial training is exactly cancelled. Instead, it rather captures the rate at which the influence of the training is balanced on average.
\end{remark}

In the diffusive regime, the rescaling given in (\ref{ERW scaling limit diffusive 1}) suggests that to obtain the proper scaling for $T^{(k)}$, we should also consider such a time-reparametrized process. We emphasize that we are interested in the asymptotic long-term behavior of the trained ERW process, where the initial training period $k=k(n)$ is negligible compared to the observation time $n$, namely $k(n)\ll n$.

Followed by the idea presented above, (\ref{definition of return time diffusive}) can be written as
\begin{equation}\label{definition of return time of rescaled ERW}
   T^{(k(n))}=\lfloor{k(n)}\rfloor+\inf\left\{m\geq 0: S^{(k(n))}\left(\lfloor{k(n)}\rfloor+m\right)=0\right\}
\end{equation}
We adopt the time reparametrization suggested by Proposition \ref{functional convergence for rescaled and conditioned ERW}, which allows us to write (\ref{definition of return time of rescaled ERW}) as the following:
\begin{equation*}
    T^{(k(n))}=\lfloor{k(n)}\rfloor+n\cdot\inf\left\{t\geq 0: S^{(k(n))}\left(\lfloor{k(n)}\rfloor+\lfloor{nt}\rfloor\right)=0\right\} 
\end{equation*}
 Furthermore, we will argue that for large $n$, the process $\left(S^{(k(n))}\left(\lfloor{nt}\rfloor\right)\right)_{t\geq 0}$ is unlikely to return to the origin on a time scale smaller than order $n$, i.e., it exhibits no fast return. After a change of variables, this leads to the next result.

\begin{theorem} \label{first return time for original ERW}
In the diffusive regime, $p\in[0, 3/4)$, we have
\begin{equation}\label{intro: return time diffusive}
   k^{-\frac{4-4p}{3-4p}}T^{(k)} \xrightarrow[k\rightarrow\infty]{(d)} (3-4p)^{\frac{1}{3-4p}}\tau^{\frac{1}{3-4p}}
\end{equation}
where $\tau$ is a $\text{Stable}(1/2)$-distributed random variable with density
\begin{equation*}
    \frac{1}{\sqrt{2\pi t^{3}}}\exp{\left(-\frac{1}{2t}\right)} \quad \text{for $t>0$.}
\end{equation*} 
\end{theorem}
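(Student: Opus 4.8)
The plan is to reduce the statement to the first return to the origin of the limiting process of Proposition~\ref{functional convergence for rescaled and conditioned ERW}, and then to transfer weak convergence of paths to weak convergence of first-passage times. Set $\alpha:=\tfrac{4-4p}{3-4p}$ and $n=n(k):=(3-4p)^{1/(3-4p)}k^{\alpha}$, the functional inverse of the critical relation~(\ref{critical training phase}); along the sequence $n=n(k)$ one has $k\sim k(n)$, so Proposition~\ref{functional convergence for rescaled and conditioned ERW} gives $\bigl(S^{(k)}(\lfloor nt\rfloor)/\sqrt n\bigr)_{t\in[\epsilon,l]}\xrightarrow[k\to\infty]{(d)}\bigl(Z(t)\bigr)_{t\in[\epsilon,l]}$ with $Z(t):=\hat B(t)+t^{2p-1}/\sqrt{3-4p}$. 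Since $n/k\to\infty$, the additive shift $\lfloor k\rfloor$ in~(\ref{definition of return time of rescaled ERW}) is negligible on scale $n$, and $k^{-\alpha}T^{(k)}=(3-4p)^{1/(3-4p)}\,T^{(k)}/n(k)$ identically; so it suffices to show that $T^{(k)}/n(k)=n(k)^{-1}\inf\{m\ge k:\ S^{(k)}(m)=0\}$ converges in distribution to $\sigma:=\inf\{t>0:\ Z(t)=0\}$.

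Next I identify $\sigma$. From the covariance $\mathbb E(\hat B(t)\hat B(s))=t^{2p-1}s^{2-2p}/(3-4p)$, $0\le s\le t$, one checks that, as processes, $\hat B(t)\overset{(d)}{=}\tfrac{t^{2p-1}}{\sqrt{3-4p}}\,\beta(t^{3-4p})$ for a standard Brownian motion $\beta$, hence $Z(t)\overset{(d)}{=}\tfrac{t^{2p-1}}{\sqrt{3-4p}}\bigl(\beta(t^{3-4p})+1\bigr)$. Since $3-4p>0$, the map $t\mapsto t^{3-4p}$ is an increasing bijection of $(0,\infty)$, so $Z$ vanishes for the first time at $\sigma=\tau^{1/(3-4p)}$ in distribution, where $\tau:=\inf\{u>0:\ \beta(u)=-1\}$ has the $\mathrm{Stable}(1/2)$ density $\tfrac{1}{\sqrt{2\pi t^{3}}}e^{-1/(2t)}$, as claimed. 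Two facts about $Z$ are needed below: almost surely $Z>0$ on a punctured neighbourhood of $0$ (because $\beta(t^{3-4p})+1\to1$), so $\sigma>0$ a.s.; and right after time $\sigma$ the path $Z$ is strictly negative (Brownian motion crosses the level $-1$), so $\sigma=\inf\{t:Z(t)=0\}=\inf\{t:Z(t)\le0\}$ a.s. --- the transversality that makes the first-passage functional a.s.\ continuous at $Z$.

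The only genuinely new estimate is that an early return is unlikely, uniformly in $k$: for small $\epsilon>0$, $\mathbb P\bigl(T^{(k)}\le\epsilon n\bigr)$ is small. Put $a_m:=\prod_{j=1}^{m-1}\bigl(1+\tfrac{2p-1}{j}\bigr)\asymp m^{2p-1}$; then $M(m):=S^{(k)}(m)/a_m$ is a martingale for $m\ge k$, with deterministic starting value $M(k)=k/a_k\asymp k^{2-2p}$ and increments $M(m+1)-M(m)=a_{m+1}^{-1}\bigl(X_{m+1}-\mathbb E[X_{m+1}\mid\mathcal F_m]\bigr)$, whence $\mathbb E\bigl[(M(\epsilon n)-M(k))^2\bigr]\le\sum_{m=k}^{\epsilon n}a_{m+1}^{-2}\asymp(\epsilon n)^{3-4p}\asymp\epsilon^{3-4p}k^{4-4p}\asymp\epsilon^{3-4p}M(k)^2$, using $n=n(k)$ and $3-4p>0$. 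Since $\{\,S^{(k)}(m)=0\ \text{for some}\ k\le m\le\epsilon n\,\}\subseteq\{\sup_{k\le m\le\epsilon n}|M(m)-M(k)|\ge M(k)\}$, Doob's maximal inequality applied to the submartingale $(M(m)-M(k))^2$ yields $\mathbb P\bigl(T^{(k)}\le\epsilon n\bigr)\le M(k)^{-2}\,\mathbb E\bigl[(M(\epsilon n)-M(k))^2\bigr]\lesssim\epsilon^{3-4p}$, which tends to $0$ as $\epsilon\to0$, uniformly in large $k$. (When $p=0$ the product telescopes, $a_m\asymp m^{-1}$, and the same bound holds.)

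Finally I would assemble the three ingredients by a standard sandwich, using that the law of $\tau^{1/(3-4p)}$ has no atoms. Fix $0<\epsilon<r$. For the lower bound, $\{T^{(k)}/n>r\}\supseteq\{S^{(k)}\neq0\text{ on }[k,\epsilon n]\}\cap\{S^{(k)}\neq0\text{ on }[\epsilon n,rn]\}$; the first event has probability at least $1-C\epsilon^{3-4p}-o(1)$ by the previous step, and the probability of the second converges to $\mathbb P(\inf_{[\epsilon,r]}Z>0)$ by Proposition~\ref{functional convergence for rescaled and conditioned ERW} together with the continuity of $Z$, so $\liminf_k\mathbb P(T^{(k)}/n>r)\ge\mathbb P(\inf_{[\epsilon,r]}Z>0)-C\epsilon^{3-4p}$, which tends to $\mathbb P(\sigma>r)$ as $\epsilon\to0$. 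For the upper bound, $\mathbb P(T^{(k)}/n<r)\le\mathbb P\bigl(T^{(k)}\le\epsilon n\bigr)+\mathbb P\bigl(S^{(k)}\text{ hits }0\text{ in }[\epsilon n,rn)\bigr)$; the first term is $\lesssim\epsilon^{3-4p}$, and the second converges --- using the transversality of $Z$ so that $[\epsilon,r)$ is a continuity set --- to $\mathbb P(\inf_{[\epsilon,r]}Z\le0)\le\mathbb P(\sigma\le r)$, whence $\limsup_k\mathbb P(T^{(k)}/n<r)\le\mathbb P(\sigma\le r)$. This gives $T^{(k)}/n(k)\xrightarrow[k\to\infty]{(d)}\sigma=\tau^{1/(3-4p)}$, and multiplying by $(3-4p)^{1/(3-4p)}$ proves~(\ref{intro: return time diffusive}). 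The main obstacle is precisely this last transfer: Proposition~\ref{functional convergence for rescaled and conditioned ERW} only controls the path on compacts bounded away from the origin --- which forces the separate quantitative estimate of the third step --- and the first-passage functional is discontinuous in general --- which is why the transversality of $Z$ and the two-sided comparison are needed; the martingale computation itself is short.
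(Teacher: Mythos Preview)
Your argument is correct and follows the same three-step architecture as the paper: the functional limit of Proposition~\ref{functional convergence for rescaled and conditioned ERW}, a uniform no-early-return estimate (the paper's Lemma~\ref{unlikely to return zero}), and a transfer from path convergence to first-passage convergence. The only differences are in execution: you use Doob's $L^{2}$ maximal inequality where the paper uses Azuma--Hoeffding (your polynomial bound $\epsilon^{3-4p}$ is weaker than the paper's exponential $\exp(-c\,\epsilon^{-(3-4p)})$ but still sufficient), and you spell out explicitly both the identification of the limit via the time-change representation $\hat B(t)\overset{(d)}{=}t^{2p-1}\beta(t^{3-4p})/\sqrt{3-4p}$ and the sandwich/transversality argument, which the paper leaves to a citation. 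One small slip: at $p=0$ your product $a_{m}=\prod_{j=1}^{m-1}(1+(2p-1)/j)$ vanishes because the $j=1$ factor is $0$, so the parenthetical claim $a_m\asymp m^{-1}$ is false as written; starting the product at $j=2$, or using the paper's normalization $\Gamma(m)/\Gamma(m+2p-1)$, repairs this and the remainder of your computation is unchanged.
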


The exponent $(4-4p)/(3-4p)$ in (\ref{intro: return time diffusive}) is greater than $2$ for the elephant with memory parameter $p>1/2$, whereas it is smaller than $2$ for $p<1/2$. For $p=1/2$, it reduces to the simple random walk and recovers the classical result: the first return time to the origin of the walker starting at position $k$ scales as $k^2$. The simulations in Figure \ref{figure 1} are consistent with Theorem \ref{first return time for original ERW}, confirming that the ERW with memory parameter $p>1/2$ indeed takes longer to return to the origin, i.e., to forget the initial training. Below, we also provide a numerical illustration of Theorem \ref{first return time for original ERW} in Figure \ref{figure 2}.

\begin{figure}
  \centering
  \includegraphics[scale = 0.55]{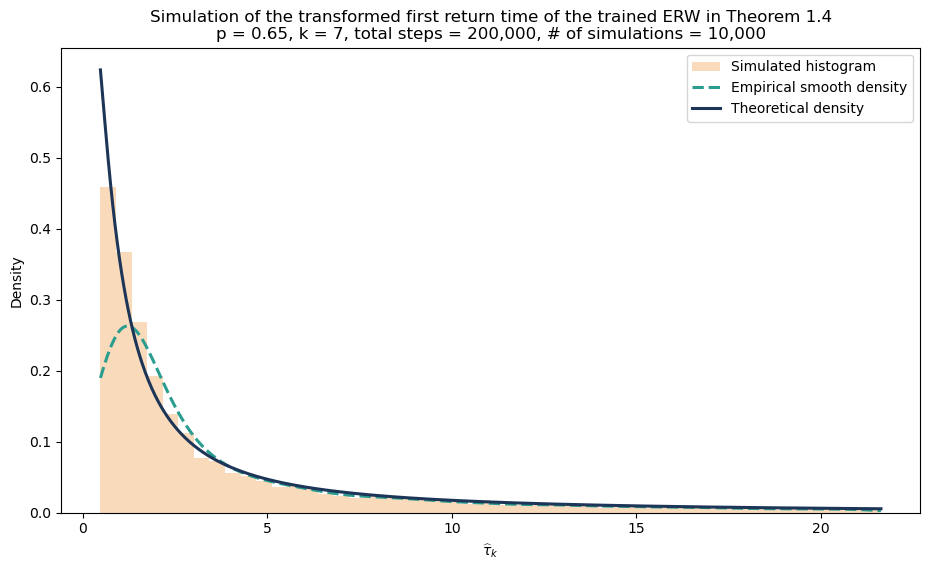}           
  \caption{This plot numerically illustrates Theorem \ref{first return time for original ERW}. According to the theorem, after the transformation, $\hat{\tau}_k:=(T^{(k)}/((3-4p)^{\frac{1}{3-4p}}k^{\frac{4-4p}{3-4p}}))^{3-4p}$, the first return time $T^{(k)}$ is expected to converge in distribution to a Stable($1/2$) law and the solid curve shows the corresponding theoretical density. The histogram shows the simulated distribution of this transformed quantity $\hat{\tau}_k$. The good agreement between the two confirms the scaling and limiting behavior predicted by Theorem \ref{first return time for original ERW}.}
  \label{figure 2}
\end{figure}

Our results also have a natural connection with persistence, but our setting
should be distinguished from the standard one. In the persistence literature
(see, for example, \cite{Persistenceprobabilitiesandexponents,Persistenceandfirst-passageproperties}), one usually fixes the initial condition and studies the large-time decay of a survival probability. More precisely, if $Y$ is a process started above zero and
\begin{equation*}
    T_0:=\inf\{t\geq 0:Y(t)\leq 0\}
\end{equation*}
denote its first-passage time to $0$. Then the persistence exponent $\theta$ is defined through the tail behavior of the first-passage time 
\begin{equation*}
    \mathbb P(T_0>t)\sim t^{-\theta},\quad t\to\infty
\end{equation*}
For one-dimensional Brownian motion and simple symmetric random walks started
from a fixed positive position, the corresponding exponent is $\theta=1/2$.

The setting of the present paper is different. Here the memory parameter $p$ and the transition mechanism are fixed, but the initial condition depends on the training length $k$. Thus the initial displacement after training grows with $k$. The exponent below therefore describes the tail behavior of the limiting rescaled return time to $0$ with this growing initial displacement.

More precisely, Theorem \ref{first return time for original ERW} shows that in the diffusive regime 
\begin{equation*}
    k^{-\frac{4-4p}{3-4p}}T^{(k)} \xrightarrow[k\rightarrow\infty]{(d)} (3-4p)^{\frac{1}{3-4p}}\tau^{\frac{1}{3-4p}}
\end{equation*}
where $\tau=\inf\{u\geq 0:B(u)+1=0\}$ is the first hitting time of $0$ by a Brownian motion started from $1$. Since $\mathbb{P}(\tau>u)\sim cu^{-1/2}$, the limiting rescaled return-time has tail distribution
\begin{equation*}
    \mathbb{P}\left((3-4p)^{1/(3-4p)}\tau^{1/(3-4p)}>t\right)\sim c_pt^{-(3-4p)/2} \;\text{ as $t\to\infty$}
\end{equation*}
Thus, the limiting rescaled return time to $0$ has tail exponent $(3-4p)/2$, and it can be viewed as the Brownian persistence exponent $1/2$ after the nonlinear time change $u=t^{3-4p}$. In particular, for $p>1/2$ this exponent is smaller than $1/2$, so the limiting tail decays more slowly, corresponding to a strong persistence (the docile elephant); while for $p<1/2$, the exponent is larger than $1/2$, so the limiting tail decays faster, corresponding to weak persistence (the rebellious elephant). At $p=1/2$, one recovers the classical exponent $1/2$.

We also note that the quickest return to the origin for the conditioned ERW occurs at $p=0$, nevertheless, the return time in this case still scales as $k^{4/3}$ and not as $k$ as one might naively expect. This highlights the impact of the training on the return dynamics. In contrast to the case here with initial training, the behavior of the first return time for the untrained ERW is thoroughly analyzed in \cite{countingzeros} for the diffusive regime and in \cite{criticalERW} for the critical regime.

Later, we extend Theorem \ref{first return time for original ERW} to the case where the initial training brings the elephant to a position of order $ck$ at time $k$, with $c\in(0,1]$. By symmetry, the case $c\in[-1,0)$ is equivalent, so we restrict attention to $c\in(0,1]$, focusing primarily on the extreme case $c=1$. This setting is also similar to the framework considered by C. Dean in \cite{ChrisDean}. We stress that the dynamics for $c=0$ is excluded from our discussion, as it likely requires new ideas to handle.

The structure of the paper is as follows. In Section \ref{section:preliminaries}, inspired by the martingale approach in \cite{invariance}, we introduce properties of the trained ERW in the diffusive regime and establish Proposition \ref{functional convergence for rescaled and conditioned ERW}, a functional limit theorem for the rescaled trained ERW. Section \ref{main proof} is devoted to proving, Theorem \ref{first return time for original ERW}, one of our main results on the asymptotic behavior of its first return time to the origin, followed by a generalization to ERWs with a broader class of initial training. Finally, in Section \ref{critical regime}, we state analogous results for the critical regime, $p=3/4$. In this text, we use $S^{(k)}$ for the ERW trained for $k$ steps, and $S^{(k(n))}$ for the ERW trained for $\lfloor{k(n)}\rfloor$ steps. And we write $f(n)\sim g(n)$ as $n\rightarrow\infty$ if we have $\lim_{n\rightarrow\infty}f(n)/g(n)=1$. We write $f_n\sim_{\mathbb{P}} g_n$ as $n\rightarrow\infty$ to indicate that $f_n-g_n\xrightarrow[n\rightarrow\infty]{(\mathbb{P})}0$, that is, the difference converges to zero in probability.

To conclude this introduction, we point out that it would be interesting to investigate similar questions in dimension 2, where the recurrence of the 2-dimensional ERW has been studied by Qin \cite{ShuoQin} and by Curien \& Laulin \cite{NicolasCurien}. The planar case, however, is much more challenging and will likely require new ideas.

\section{First results for the trained ERW} \label{section:preliminaries}

We recall that the trained ERW refers to the process $\left(S^{(k)}\left(n\right)\right)_{n\geq 0}$, where the elephant is trained for the first $k$ steps. In the diffusive regime, we adopt the time reparametrization $\left(S^{(k(n))}\left(\lfloor{nt}\rfloor\right)\right)_{t\geq 0}$. For each $n \in \mathbb{N}$, we denote by $\mathbb{P}_{k(n)}$ the law of the trained ERW, and by $\mathbb{E}_{k(n)}$ the corresponding expectation under $\mathbb{P}_{k(n)}$.


Recall in the diffusive regime, the memory parameter $p$ satisfies $p\in \left[0, 3/4\right)$. We set for $j\in\mathbb{N}$,
\begin{equation*}
    a_{j}:=\frac{\Gamma\left(j\right)}{\Gamma \left(j+2p-1 \right)}
\end{equation*}
Recall that, by Stirling’s formula,
\begin{equation}\label{asymp coeff}
    a_{j}\sim j^{1-2p} \quad \text{ as $j\rightarrow\infty$}
\end{equation}

We are then ready to define the process for $t\geq 0$
\begin{equation}\label{M_1^{(k,n)} martingale}
    \left(M_1^{(k,n)}(t)\right)_{t\geq 0}:= \left(a_{\lfloor{k(n)}\rfloor+\lfloor{nt}\rfloor}S^{(k(n))}\left(\lfloor{k(n)}\rfloor+\lfloor{nt}\rfloor\right)-a_{\lfloor{k(n)}\rfloor}S^{(k(n))}\left(\lfloor{k(n)}\rfloor\right)\right)_{t\geq 0}
\end{equation}

Though similar properties of the process $(M_1^{(k,n)}(t))_{t\geq 0}$ have been observed in other literatures, we include a proof here for completeness.

\begin{lemma}\label{diffusive martingale}
    We have the next properties for the process $(M_1^{(k,n)}(t))_{t\geq 0}$,
    \begin{enumerate}[label={\arabic*.}, ref=2.1.\arabic*]
        \item \label{part 1}  For every $n\in\mathbb{N}$, $(M_1^{(k,n)}(t))_{t\geq 0}$ is a martingale that starts from 0 under the law $\mathbb{P}_{k(n)}$.
        \item \label{part 2} For every $t\geq 0$, its quadratic variation has the following asymptotics
        \begin{equation*}\label{quadratic variation}
            \left[M_1^{(k,n)}, M_1^{(k,n)}\right](t) \sim \frac{t^{3-4p}}{3-4p}n^{3-4p} \quad \text{as $n\rightarrow\infty$}
        \end{equation*}
    \end{enumerate}
\end{lemma}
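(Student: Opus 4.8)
\emph{Proof strategy.} Write $\mathcal{F}_m=\sigma(X_1,\dots,X_m)$, $K:=\lfloor k(n)\rfloor$, and recall the standing assumption $k(n)\ll n$. For the martingale claim the only ingredient is the elementary one-step conditional mean of the trained ERW: for every $m\geq K$,
\[
\mathbb{E}_{k(n)}\!\left[X_{m+1}\mid\mathcal{F}_m\right]=\frac{2p-1}{m}\,S^{(k(n))}(m),
\]
so that $\mathbb{E}_{k(n)}[S^{(k(n))}(m+1)\mid\mathcal{F}_m]=\tfrac{m+2p-1}{m}S^{(k(n))}(m)$. Since $a_{m+1}/a_m=\Gamma(m+1)\Gamma(m+2p-1)/\bigl(\Gamma(m)\Gamma(m+2p)\bigr)=m/(m+2p-1)$, this rearranges to $\mathbb{E}_{k(n)}[a_{m+1}S^{(k(n))}(m+1)\mid\mathcal{F}_m]=a_m S^{(k(n))}(m)$; as $|a_m S^{(k(n))}(m)|\leq m\,a_m$ is integrable, $\bigl(a_m S^{(k(n))}(m)\bigr)_{m\geq K}$ is an $(\mathcal{F}_m)_{m\geq K}$-martingale. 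Passing to the time-changed filtration $\mathcal{G}^{(n)}_t:=\mathcal{F}_{K+\lfloor nt\rfloor}$, the process $M_1^{(k,n)}$ is this martingale evaluated at time $K+\lfloor nt\rfloor$ minus the $\mathcal{G}^{(n)}_0$-measurable constant $a_K S^{(k(n))}(K)$; hence it is a c\`adl\`ag $(\mathcal{G}^{(n)}_t)$-martingale, piecewise constant with jumps only at the times $j/n$ ($j\geq1$), vanishing at $t=0$.

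For the quadratic variation I would compute $\bigl[M_1^{(k,n)},M_1^{(k,n)}\bigr](t)=\sum_{j=1}^{\lfloor nt\rfloor}\Delta_j^2$ directly, where, from $S^{(k(n))}(K+j)=S^{(k(n))}(K+j-1)+X_{K+j}$, the $j$-th jump equals $\Delta_j=a_{K+j}X_{K+j}+(a_{K+j}-a_{K+j-1})S^{(k(n))}(K+j-1)$. Squaring and using $X_{K+j}^2=1$ splits the quadratic variation into a \emph{deterministic} main term $\sum_{j\leq\lfloor nt\rfloor}a_{K+j}^2$, a cross term, and a remainder $\sum_{j\leq\lfloor nt\rfloor}(a_{K+j}-a_{K+j-1})^2S^{(k(n))}(K+j-1)^2$. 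By (\ref{asymp coeff}), $a_{K+j}^2\sim(K+j)^{2-4p}$ with $2-4p>-1$, so a comparison with an integral together with $K\ll n$ gives
\[
\sum_{j=1}^{\lfloor nt\rfloor}a_{K+j}^2=\sum_{m=K+1}^{K+\lfloor nt\rfloor}a_m^2\;\sim\;\frac{(K+\lfloor nt\rfloor)^{3-4p}-K^{3-4p}}{3-4p}\;\sim\;\frac{t^{3-4p}}{3-4p}\,n^{3-4p},
\]
which is the claimed value; it then remains to show that the cross term and the remainder are $o_{\mathbb{P}}(n^{3-4p})$.

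The engine for that is a uniform second-moment bound for the trained walk. Orthogonality of martingale increments gives $\mathbb{E}_{k(n)}[(a_mS^{(k(n))}(m))^2]=(a_KS^{(k(n))}(K))^2+\sum_{i=K+1}^m\mathbb{E}_{k(n)}[\Delta_i^2]$, and since $\mathbb{E}_{k(n)}[\Delta_i^2\mid\mathcal{F}_{i-1}]=a_i^2\bigl(1-(2p-1)^2S^{(k(n))}(i-1)^2/(i-1)^2\bigr)\leq a_i^2$, $\sum_{i\leq m}a_i^2\sim m^{3-4p}/(3-4p)$, $|S^{(k(n))}(K)|\leq K$ and $a_K\sim K^{1-2p}$, one obtains for $K\leq m\leq K+\lfloor nt\rfloor$ that $\mathbb{E}_{k(n)}[S^{(k(n))}(m)^2]\leq C\bigl(m^{4p-2}K^{4-4p}+m\bigr)$ with $C=C(p,t)$. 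Inserting this bound and $a_{K+j}-a_{K+j-1}=O((K+j)^{-2p})$ into $\mathbb{E}_{k(n)}$ of the absolute cross term and of the remainder reduces everything to finite sums of powers of $K+j$: the $K$-free parts are controlled by $\sum_{j\leq nt}(K+j)^{1-4p}$ and $\sum_{j\leq nt}(K+j)^{3/2-4p}$, whose leading exponents $2-4p$ and $5/2-4p$ are $<3-4p$ (and the sums are only logarithmic or bounded once these exponents drop to $\leq-1$), while the terms carrying a power of $K$ are of the form $K^{3-4p}$ or $K^{2-2p}n^{1-2p}$, each $o(n^{3-4p})$ precisely because $k(n)\ll n$ and the exponents $3-4p$, $2-2p$ are positive; when $p=1/2$, $a_j\equiv1$ and the cross term and remainder vanish identically. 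Markov's inequality then yields that the cross term and the remainder are $o_{\mathbb{P}}(n^{3-4p})$, hence $\bigl[M_1^{(k,n)},M_1^{(k,n)}\bigr](t)=\tfrac{t^{3-4p}}{3-4p}n^{3-4p}(1+o_{\mathbb{P}}(1))$; the predictable bracket $\langle M_1^{(k,n)}\rangle(t)$ satisfies the same asymptotics, the preceding estimates also controlling $\mathbb{E}_{k(n)}[\Delta_j^2]$ directly.

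The one step I expect to be delicate is precisely this last bookkeeping: $S^{(k(n))}$ carries two distinct scales — an $O(\sqrt m)$ fluctuation and an $O\bigl(m^{2p-1}K^{1-2p}|S^{(k(n))}(K)|\bigr)$ training-induced drift — and one must keep them apart in every estimate and verify that each power of $K$ so generated is absorbed by $k(n)\ll n$ alone, rather than by the stronger critical-training scaling that is only needed later for Proposition~\ref{functional convergence for rescaled and conditioned ERW}. The rest is routine.
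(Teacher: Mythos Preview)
Your proposal is correct. Both you and the paper start from the identical jump factorization $\Delta_j=a_{K+j}\bigl(X_{K+j}+(1-2p)S^{(k(n))}(K+j-1)/(K+j-1)\bigr)$, and both identify $\sum_{j\leq\lfloor nt\rfloor}a_{K+j}^2\sim t^{3-4p}n^{3-4p}/(3-4p)$ as the main term via the asymptotics of $a_j$ and an integral comparison. The divergence is only in how the correction is disposed of: the paper simply invokes the law of large numbers for the ERW (cited from \cite{CLT}) to declare $S^{(k(n))}(m-1)/(m-1)=o(1)$ over the range of summation, so that each squared jump is $\sim a_{K+j}^2$ pointwise and one integrates; you instead expand the square into three pieces and bound the cross term and remainder in $L^1$ via a self-contained second-moment estimate for the trained walk, then apply Markov. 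Your route is longer and requires the bookkeeping you flag at the end, but it is fully self-contained and sidesteps the mild issue that the cited LLN is stated for a fixed ERW while here the training length $k(n)$ varies with $n$; the paper's route is essentially a two-line appeal to an external result. For Part~1 you spell out the one-step computation that the paper leaves entirely to references; the content is the same.
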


\begin{proof}
The proof of Lemma \ref{part 1} is straightforward, as the martingale property for the case $k(n)=0$ has already been shown in \cite{martingaleapproach, countingzeros, CLT}.  The extension to a general $k(n)\geq 1$ follows directly.  

Next, we turn to the proof of Lemma \ref{quadratic variation}. This result follows directly from the definition of the quadratic variation of a martingale. Specifically, we can write:

\begin{align*}
    \left[M_1^{(k,n)}, M_1^{(k,n)}\right](t)&=\sum_{s\leq t}\left(\Delta M_1^{(k,n)}(s)\right)^2\\ &=\int_{\lfloor{k(n)}\rfloor}^{\lfloor{k(n)}\rfloor+\lfloor{nt}\rfloor}\left(a_{\lfloor{u}\rfloor}S^{(k(n))}\left(\lfloor{u}\rfloor\right)-a_{\lfloor{u}\rfloor-1}S^{(k(n))}\left(\lfloor{u}\rfloor-1\right)\right)^2 \diff u\\
    &=\int_{\lfloor{k(n)}\rfloor}^{\lfloor{k(n)}\rfloor+\lfloor{nt}\rfloor}a_{\lfloor{u}\rfloor}^2\left(\pm1+(1-2p)\frac{S^{(k(n))}\left(\lfloor{u}\rfloor-1\right)}{\lfloor{u}\rfloor-1}\right)^2 \diff u \\
     & \sim \frac{t^{3-4p}}{3-4p}n^{3-4p} \quad \text{as $n\rightarrow\infty$}
\end{align*}
In the penultimate line, since $k(n)\ll n$, and by using an argument similar to the one in the proof of the law of large numbers for the untrained ERW (see {\cite[Corollary 1]{CLT}}), we get an analogous result for the trained elephant: for $u \in (\lfloor k(n)\rfloor, \lfloor k(n)\rfloor+\lfloor nt\rfloor)$, we have
\begin{equation*}
    \frac{S^{(k(n))}\left(\lfloor{u}\rfloor-1\right)}{\lfloor{u}\rfloor-1}=o(1) \quad \text{as $n\rightarrow\infty$}
\end{equation*}
To conclude the proof, we apply the asymptotics of $a_j$ given in (\ref{asymp coeff}).
\end{proof}

Before delving into the proof of Proposition \ref{functional convergence for rescaled and conditioned ERW}, we recall from Section \ref{subsection: motivation} that our result can be viewed as a reformulation of Dean’s result in {\cite[Theorem 3.1, Equation (3.6)]{ChrisDean}}, since the ERW model satisfies all the assumptions required for the Time-Step Dominant Small Urns regime. In particular, the rescaling appeared in Proposition \ref{functional convergence for rescaled and conditioned ERW} coincides with that of {\cite[Equation (3.6)]{ChrisDean}}. Nevertheless, our formulation is more precise in the context of the current model.

With the martingale theory tools in hand, we can apply the martingale functional central limit theorem (abbreviated as martingale FCLT from now on) to determine the asymptotic behavior of the conditioned ERW, which we will explore in the next section.

In this section, we aim to provide a detailed proof of the results concerning the long-time behavior of the conditioned ERW. As mentioned at the end of the previous section, our main tool is the martingale FCLT (e.g see {\cite[Theorem 2.1]{martingaleFCLT}}).

Motivated by the asymptotic behavior of quadratic variation provided in Lemma \ref{part 2}, we introduce the following process for each $n\in\mathbb{N}$,
\begin{equation}\label{N_1^{(n)}}
    N_{1}^{(k,n)}(t):=\frac{M_{1}^{(k,n)}(t)}{ \sqrt{\frac{n^{3-4p}}{3-4p}}}, \; \text{ for $t\geq 0$}
\end{equation}
 which is a martingale that starts from $0$.

We are now ready to apply the martingale FCLT, which implies that the process converges weakly to a Brownian motion in the Skorokhod topology. To ultimately deduce the scaling limit of $\left(S^{(k(n))}\left(\lfloor{k(n)}\rfloor+\lfloor{nt}\rfloor\right)\right)_{t\geq 0}$, from equations (\ref{M_1^{(k,n)} martingale}) and (\ref{N_1^{(n)}}), it is sufficient to require
\begin{equation*}
a_{\lfloor{k(n)}\rfloor}S^{(k(n))}\left(\lfloor{k(n)}\rfloor\right) \text{ grows at most at the rate of } \sqrt{\frac{n^{3-4p}}{3-4p}} 
\end{equation*}
Here, we focus on the non-degenerate case, meaning they grow at the same rate. In this scenario, we obtain the critical training phase
\begin{equation}\label{k(n) diffusive}
    k(n)\sim (3-4p)^{-1/(4-4p)} n^{(3-4p)/(4-4p)} \quad \text{ as $n\rightarrow\infty$}
\end{equation}
We stress here that $k(n)$ indeed grows slower than $n$ when $n$ is large enough.
\begin{proof}[Proof of Proposition \ref{functional convergence for rescaled and conditioned ERW}]
    We start off by showing the expected value of the maximum jump of the martingale process $\left(N_{1}^{(k,n)}(t)\right)_{t\geq 0}$ is asymptotically negligible as $n\rightarrow\infty$. First, we denote the martingale difference of $M_1^{(k,n)}$ at time $t$ by $\Delta M_1^{(k,n)}(t)$. Since the step size of the ERW is always $\pm 1$, we can write
    \begin{align*}
        \Delta M_1^{(k,n)}(t) &:= a_{\lfloor{k(n)}\rfloor+\lfloor{nt}\rfloor+1}S^{(k(n))}\left(\lfloor{k(n)}\rfloor+\lfloor{nt}\rfloor+1\right)-a_{\lfloor{k(n)}\rfloor+\lfloor{nt}\rfloor}S^{(k(n))}\left(\lfloor{k(n)}\rfloor+\lfloor{nt}\rfloor\right)\\
        &=\frac{1-2p}{\lfloor{k(n)}\rfloor+\lfloor{nt}\rfloor+2p-1}a_{\lfloor{k(n)}\rfloor+\lfloor{nt}\rfloor}S^{(k(n))}\left(\lfloor{k(n)}\rfloor+\lfloor{nt}\rfloor\right)\pm a_{\lfloor{k(n)}\rfloor+\lfloor{nt}\rfloor+1}
    \end{align*}
    and by observing the identity
    \begin{equation*}
        a_{\lfloor{k(n)}\rfloor+\lfloor{nt}\rfloor+1}-a_{\lfloor{k(n)}\rfloor+\lfloor{nt}\rfloor}=\frac{1-2p}{\lfloor{k(n)}\rfloor+\lfloor{nt}\rfloor+2p-1}a_{\lfloor{k(n)}\rfloor+\lfloor{nt}\rfloor}
    \end{equation*}
    together with $|S^{(k(n))}\left(\lfloor{k(n)}\rfloor+\lfloor{nt}\rfloor\right)|\leq \lfloor{k(n)}\rfloor+\lfloor{nt}\rfloor$ and $|1-2p|\leq 1$, we have the following upper bound for the martingale difference at time $t$,
    \begin{equation}\label{martingale difference diffusive}
        \left|\Delta M_1^{(k,n)}(t)\right|\leq 2 a_{\lfloor{k(n)}\rfloor+\lfloor{nt}\rfloor+1} \quad \text{$\mathbb{P}_{k(n)}$-a.s.}
    \end{equation}
    Furthermore, combining (\ref{asymp coeff}), (\ref{k(n) diffusive}), and the upper bound provided in (\ref{martingale difference diffusive}), we conclude that, for every $t\geq0$
    \begin{equation*}
        \mathbb{E}_{k(n)}\left(\Delta N_{1}^{(k,n)}(t)\right)=\mathbb{E}_{k(n)}\left(\frac{\Delta M_1^{(k,n)}(t)}{\sqrt{\frac{n^{3-4p}}{3-4p}}}\right)\xrightarrow{n\rightarrow\infty} 0
    \end{equation*}
On the other hand, directly from Lemma \ref{quadratic variation} and (\ref{N_1^{(n)}}), it follows for every $t\geq 0$,
    \begin{equation*}
        \left[N_{1}^{(k,n)},N_{1}^{(k,n)}\right](t)\xrightarrow[n\rightarrow\infty]{(d)} t^{3-4p} \quad \text{as $n\rightarrow\infty$}
    \end{equation*}
By martingale FCLT, we obtain the following weak convergence in the Skorokhod topology,
\begin{equation}\label{prelim convergence diffusive}
    \left(N_{1}^{(k,n)}(t)\right)_{t\geq 0}\xrightarrow[n\rightarrow\infty]{(d)} \left(B(t^{3-4p})\right)_{t\geq 0}
\end{equation}
Using the relation given in (\ref{k(n) diffusive}), we can equivalently rewrite (\ref{prelim convergence diffusive}) as the following convergence, which holds in the Skorokhod topology for each pair of $0<\epsilon\leq l <\infty$: 
\begin{equation}\label{ERW scaling limit diffusive}
    \left(\frac{S^{(k(n))}\left(\lfloor{k(n)}\rfloor+\lfloor{nt}\rfloor\right)}{\sqrt{n}}\right)_{t\in [\epsilon,l]}\xrightarrow[n\rightarrow\infty]{(d)} \left(\hat{B}(t)+\frac{t^{2p-1}}{\sqrt{3-4p}}\right)_{t\in [\epsilon,l]} 
\end{equation}
To show (\ref{ERW scaling limit diffusive 1}), we observe that 
\begin{equation}\label{equivalent result}
    \frac{S^{(k(n))}\left(\lfloor{nt}\rfloor\right)}{\sqrt{n}}\sim _{\mathbb{P}_{k(n)}}\frac{S^{(k(n))}\left(\lfloor{k(n)}\rfloor+\left\lfloor{n\left(t-\frac{k(n)}{n}\right)}\right\rfloor\right)}{\sqrt{n}} \quad\text{for large $n$}
\end{equation}
Since $k(n)\ll n$, by (\ref{ERW scaling limit diffusive}) and the continuity of $\hat{B}$, we recover the statement of Proposition \ref{functional convergence for rescaled and conditioned ERW}.
\end{proof}

Before proving Proposition \ref{convergence for k(n)>> critical training phase}, let us briefly explain the idea. In Proposition \ref{functional convergence for rescaled and conditioned ERW}, the training length $k(n)$ is tuned so that the deterministic contribution of the initial training and the martingale fluctuations are of the same order. Proposition \ref{convergence for k(n)>> critical training phase} treats the overtrained regime $k(n)\gg n^{(3-4p)/(4-4p)}$, where the deterministic effect of the initial training is larger than the critical training phase. Then the leading term is no longer the finite correction appearing in Proposition \ref{functional convergence for rescaled and conditioned ERW}, but the deterministic profile $k(n)^{2-2p}/(nt)^{1-2p}$. Therefore, the proof consists in going back to the martingale functional limit theorem (\ref{prelim convergence diffusive}), subtracting this leading term, and observing that the remaining fluctuations are still of order $\sqrt{n}$ and converge to a centered Gaussian limit. Evaluating the resulting process convergence at $t=1$ yields the statement. And here we emphasize that $k(n)$ still remains negligible compared to $n$.

\begin{proposition}\label{convergence for k(n)>> critical training phase}
    In the diffusive regime, $p\in[0,3/4)$, if $k(n) \gg n^{(3-4p)/(4-4p)}$ as $n\rightarrow\infty$, we have the following weak convergence
    \begin{align*}
        \frac{S^{(k(n))}\left(n\right)-\frac{k(n)^{2-2p}}{n^{1-2p}}}{\sqrt{n}}\xrightarrow[n\rightarrow\infty]{(d)}\mathcal{N}\left(0\, , \frac{1}{3-4p}\right)
    \end{align*}
\end{proposition}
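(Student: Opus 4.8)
The plan is to recycle the martingale machinery of Proposition~\ref{functional convergence for rescaled and conditioned ERW}. Fix the training $X_1=\dots=X_{\lfloor k(n)\rfloor}=1$, so $S^{(k(n))}(\lfloor k(n)\rfloor)=\lfloor k(n)\rfloor$, and recall from Lemma~\ref{diffusive martingale} the martingale $M_1^{(k,n)}$ of \eqref{M_1^{(k,n)} martingale} and its normalisation $N_1^{(k,n)}$ of \eqref{N_1^{(n)}}. Writing $t_n:=(n-\lfloor k(n)\rfloor)/n\to1$, so that $\lfloor k(n)\rfloor+\lfloor nt_n\rfloor=n$, and using that $M_1^{(k,n)}$ is a mean-zero martingale, one gets
\[
   a_n S^{(k(n))}(n)=M_1^{(k,n)}(t_n)+a_{\lfloor k(n)\rfloor}\lfloor k(n)\rfloor,
   \qquad
   \mathbb{E}_{k(n)}\bigl[S^{(k(n))}(n)\bigr]=\frac{a_{\lfloor k(n)\rfloor}\lfloor k(n)\rfloor}{a_n}.
\]
Hence $S^{(k(n))}(n)-\mathbb{E}_{k(n)}[S^{(k(n))}(n)]=a_n^{-1}M_1^{(k,n)}(t_n)$, and the proof reduces to (a) a central limit theorem for $M_1^{(k,n)}(t_n)$ via the martingale FCLT, and (b) replacing the exact centring $a_{\lfloor k(n)\rfloor}\lfloor k(n)\rfloor/a_n$ by $k(n)^{2-2p}/n^{1-2p}$.

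For (a) I would follow the proof of Proposition~\ref{functional convergence for rescaled and conditioned ERW} almost verbatim. Lindeberg is immediate: \eqref{martingale difference diffusive} and \eqref{asymp coeff} give $\sup_{t\le t_n}|\Delta M_1^{(k,n)}(t)|\le 2\max(a_{\lfloor k(n)\rfloor},a_n)=O(\max(1,n^{1-2p}))$ (since $k(n)\to\infty$ makes $a_{\lfloor k(n)\rfloor}$ bounded when $p>1/2$, and $a_{\lfloor k(n)\rfloor}\le a_n$ when $p\le1/2$), which after dividing by $\sqrt{n^{3-4p}/(3-4p)}=\Theta(n^{3/2-2p})$ tends to $0$ since $p<3/4$. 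The quadratic variation needs a little more than Lemma~\ref{part 2}, because the law of large numbers invoked there, $S^{(k(n))}(\lfloor u\rfloor-1)/(\lfloor u\rfloor-1)=o(1)$, fails near the lower endpoint $u\approx k(n)$ of the defining integral (there the ratio is of order $1$). I would split that integral at $\epsilon n$: on $(\lfloor k(n)\rfloor,\epsilon n)$ the crude bound $|S^{(k(n))}(j)|\le j$ makes the contribution $O(\epsilon^{3-4p}n^{3-4p})$, negligible after $\epsilon\to0$ because $p<3/4$; on $(\epsilon n,n)$ a second-moment estimate $\mathbb{E}[S^{(k(n))}(m)^2]=O(m)+O(k(n)^{4-4p}m^{4p-2})$ --- read off from the martingale relation together with the expected quadratic-variation bound $\le 4\int a_{\lfloor s\rfloor}^2\,ds$ and with $\mathbb{E}[S^{(k(n))}(m)]=O(k(n)^{2-2p}m^{2p-1})$ --- forces $S^{(k(n))}(\lfloor u\rfloor-1)/(\lfloor u\rfloor-1)$ to be uniformly $o_{\mathbb{P}}(1)$, so this part is $(1+o_{\mathbb{P}}(1))\int_{\epsilon n}^n a_{\lfloor u\rfloor}^2\,du\sim(1-\epsilon^{3-4p})n^{3-4p}/(3-4p)$. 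Letting $n\to\infty$ then $\epsilon\to0$ yields $[N_1^{(k,n)},N_1^{(k,n)}](t_n)\xrightarrow{(\mathbb{P})}1$, so the martingale FCLT gives $N_1^{(k,n)}\xrightarrow{(d)}(B(t^{3-4p}))_{t\ge0}$ in the Skorokhod topology; since $t_n\to1$ and the limit is continuous, $N_1^{(k,n)}(t_n)\xrightarrow{(d)}B(1)\sim\mathcal{N}(0,1)$.

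Converting back, $a_n^{-1}M_1^{(k,n)}(t_n)=\bigl(\sqrt{n^{3-4p}/(3-4p)}/a_n\bigr)N_1^{(k,n)}(t_n)$ with $\sqrt{n^{3-4p}/(3-4p)}/(a_n\sqrt n)\to1/\sqrt{3-4p}$ by \eqref{asymp coeff}, so Slutsky gives $(S^{(k(n))}(n)-\mathbb{E}_{k(n)}[S^{(k(n))}(n)])/\sqrt n\xrightarrow{(d)}\mathcal{N}(0,1/(3-4p))$. For (b), Stirling in the form $j\,a_j=j^{2-2p}(1+O(1/j))$ and $a_n^{-1}=n^{2p-1}(1+O(1/n))$ gives $\bigl|a_{\lfloor k(n)\rfloor}\lfloor k(n)\rfloor/a_n-k(n)^{2-2p}/n^{1-2p}\bigr|=O((k(n)/n)^{1-2p})$, which is $o(1)$ for $p\le1/2$ and, for $p\in(1/2,3/4)$, equals $O((n/k(n))^{2p-1})=o(\sqrt n)$: the overtrained hypothesis $k(n)\gg n^{(3-4p)/(4-4p)}$ gives $n/k(n)\ll n^{1/(4-4p)}$, and $(2p-1)/(4-4p)<1/2\iff p<3/4$. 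Hence the two centrings differ by $o(\sqrt n)$, and a final Slutsky step replaces $\mathbb{E}_{k(n)}[S^{(k(n))}(n)]$ by $k(n)^{2-2p}/n^{1-2p}$, completing the proof.

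I expect the main obstacle to be the quadratic-variation estimate in step (a): unlike in Proposition~\ref{functional convergence for rescaled and conditioned ERW}, the law of large numbers cannot be applied uniformly over the integration range, since the walk starts from the macroscopic height $\lfloor k(n)\rfloor$, so the initial time-window must be isolated and shown to be negligible using $p<3/4$. It is also worth flagging that step (b) is the only place where the \emph{overtrained} hypothesis $k(n)\gg n^{(3-4p)/(4-4p)}$, rather than merely $k(n)\ll n$, is actually used.
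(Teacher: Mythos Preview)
Your proof is correct and follows essentially the same route as the paper's: both obtain the result from the martingale FCLT applied to $N_1^{(k,n)}$ (the paper simply re-invokes the convergence \eqref{prelim convergence diffusive} already derived for Proposition~\ref{functional convergence for rescaled and conditioned ERW}, replaces $a_{\lfloor k(n)\rfloor+\lfloor nt\rfloor}$ and $a_{\lfloor k(n)\rfloor}\lfloor k(n)\rfloor$ by their Stirling asymptotics, rearranges, and sets $t=1$). Your treatment of the quadratic variation---isolating the window $(\lfloor k(n)\rfloor,\epsilon n)$ where the ratio $S^{(k(n))}(\lfloor u\rfloor-1)/(\lfloor u\rfloor-1)$ is \emph{not} $o(1)$ and bounding its contribution by $O(\epsilon^{3-4p}n^{3-4p})$---is in fact more careful than the paper, which glosses over this point in the proof of Lemma~\ref{part 2}.

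One minor correction to your closing remark: the overtrained hypothesis is not actually needed in step~(b). For $p\in(1/2,3/4)$ the crude bound $(n/k(n))^{2p-1}\le n^{2p-1}$ (valid since $k(n)\ge 1$) already yields $o(\sqrt n)$, because $2p-1<1/2$ whenever $p<3/4$. The assumption $k(n)\gg n^{(3-4p)/(4-4p)}$ serves only to make the \emph{statement} non-degenerate (it is precisely the condition under which the centring $k(n)^{2-2p}/n^{1-2p}$ dominates $\sqrt n$, so that the proposition says something beyond Proposition~\ref{functional convergence for rescaled and conditioned ERW}); the proof itself goes through for any $k(n)\ll n$.
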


\begin{proof}[Proof of Proposition \ref{convergence for k(n)>> critical training phase}]
The proof is a different reading of (\ref{prelim convergence diffusive}) in the regime where the deterministic contribution of the initial training dominates the critical training phase. Using (\ref{asymp coeff}) and the standing assumption $k(n)\ll n$, relation (\ref{prelim convergence diffusive}) gives
\begin{align*}
     \frac{(nt)^{1-2p}S^{(k(n))}\left(\lfloor{k(n)}\rfloor+\lfloor{nt}\rfloor\right)-k(n)^{2-2p}}{\sqrt{\frac{n^{3-4p}}{3-4p}}}\xrightarrow[n\rightarrow\infty]{(d)} B(t^{3-4p})
\end{align*}
After rearrangement, we have
\begin{equation*}
    \frac{S^{(k(n))}\left(\lfloor{k(n)}\rfloor+\lfloor{nt}\rfloor\right)-\frac{k(n)^{2-2p}}{(nt)^{1-2p}}}{\sqrt{n}}\xrightarrow[n\rightarrow\infty]{(d)} \frac{t^{2p-1}}{\sqrt{3-4p}}B(t^{3-4p})
\end{equation*}
After applying (\ref{equivalent result}) and setting $t=1$ yields the desired result.
\end{proof}

\section{Proof of Theorem \ref{first return time for original ERW}}\label{main proof}

The convergence in Proposition \ref{functional convergence for rescaled and conditioned ERW} holds on compact time intervals bounded away from $0$. By itself, it therefore does not exclude returns to the origin occurring at times that are negligible to $n$. To show that the first return time indeed occurs on the scale $n$, we need an additional estimate showing that returns on time scales smaller than $n$ are extremely unlikely.

\begin{lemma}\label{unlikely to return zero}
   Under the condition $k(n)\ll n$, the following uniform convergence holds for the process $\left(S^{(k(n))}\left(\lfloor k(n)\rfloor+\lfloor nt\rfloor\right)\right)_{t \geq 0}$ under $\mathbb{P}_{k(n)}$:
\begin{equation*}
   \sup_{n \in \mathbb{N}} \,
   \mathbb{P}_{k(n)}\!\left(
      \inf_{0 \leq t \leq \epsilon} 
      S^{(k(n))}\left(\lfloor k(n)\rfloor+\lfloor nt\rfloor\right) \leq 0
   \right) \xrightarrow{\epsilon\downarrow 0} 0.
\end{equation*}
\end{lemma}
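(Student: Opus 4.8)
The plan is to show that the trained ERW, once it has been pushed to a position of order $k(n)$ at time $k(n)$, stays positive over the initial time window $[0,\epsilon n]$ with probability tending to $1$ uniformly in $n$, by controlling the lower fluctuations of the martingale $M_1^{(k,n)}$ introduced in \eqref{M_1^{(k,n)} martingale}. The key observation is the decomposition
\begin{equation*}
 S^{(k(n))}\!\left(\lfloor k(n)\rfloor+\lfloor nt\rfloor\right)
 = \frac{1}{a_{\lfloor k(n)\rfloor+\lfloor nt\rfloor}}\left(M_1^{(k,n)}(t) + a_{\lfloor k(n)\rfloor}S^{(k(n))}\!\left(\lfloor k(n)\rfloor\right)\right),
\end{equation*}
which is valid since $a_j>0$ for all $j$ in the diffusive regime. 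The deterministic drift term equals $a_{\lfloor k(n)\rfloor}S^{(k(n))}(\lfloor k(n)\rfloor)$, which by \eqref{asymp coeff}, the training $X_1=\cdots=X_k=1$ (so $S^{(k(n))}(\lfloor k(n)\rfloor)=\lfloor k(n)\rfloor$), and the calibration \eqref{k(n) diffusive} is of exact order $\sqrt{n^{3-4p}/(3-4p)}$; dividing by $a_{\lfloor k(n)\rfloor+\lfloor nt\rfloor}\sim (nt)^{1-2p}$ this contributes a term comparable to $\sqrt{n}\,t^{2p-1}/\sqrt{3-4p}$, which for $p<1/2$ is smallest near $t=\epsilon$ and for $p\geq 1/2$ is of order $\sqrt{n}$ on all of $[\epsilon n/n,\ldots]$ — in both cases bounded below by a positive multiple of $\sqrt{n}\,\epsilon^{2p-1}$ on $[\epsilon/2,\epsilon]$. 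So the event in question can fail only if the martingale $M_1^{(k,n)}$ dips below a level of order $-\sqrt{n^{3-4p}}$ somewhere on $[0,\epsilon]$.

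The main step is then a maximal inequality. Since $N_1^{(k,n)}$ defined in \eqref{N_1^{(n)}} is an $L^2$-martingale starting from $0$ with $\mathbb{E}_{k(n)}\big[N_1^{(k,n)}(\epsilon)^2\big]=\mathbb{E}_{k(n)}\big[[N_1^{(k,n)},N_1^{(k,n)}](\epsilon)\big]\to \epsilon^{3-4p}$ by Lemma \ref{part 2}, Doob's $L^2$ maximal inequality gives
\begin{equation*}
 \mathbb{P}_{k(n)}\!\left(\inf_{0\leq t\leq \epsilon} N_1^{(k,n)}(t)\leq -\lambda\right)
 \leq \mathbb{P}_{k(n)}\!\left(\sup_{0\leq t\leq \epsilon}\big|N_1^{(k,n)}(t)\big|\geq \lambda\right)
 \leq \frac{\mathbb{E}_{k(n)}\big[N_1^{(k,n)}(\epsilon)^2\big]}{\lambda^2},
\end{equation*}
so the left side is at most $C\epsilon^{3-4p}/\lambda^2$ for $n$ large, hence uniformly in $n$ after absorbing finitely many small $n$. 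To turn the event in Lemma \ref{unlikely to return zero} into such a deviation, I would fix $t_0\in[\epsilon/2,\epsilon]$ (or run the argument on a dyadic mesh), note that $S^{(k(n))}(\lfloor k(n)\rfloor+\lfloor nt\rfloor)\leq 0$ for some $t\leq\epsilon$ forces, via the displayed decomposition, $M_1^{(k,n)}(t)\leq -a_{\lfloor k(n)\rfloor}S^{(k(n))}(\lfloor k(n)\rfloor)$ for that $t$, i.e. $\inf_{0\leq t\leq\epsilon}N_1^{(k,n)}(t)\leq -c_0/\sqrt{1}$ with $c_0$ a constant not depending on $n$ once we use \eqref{k(n) diffusive}; but here one has to be a little careful because the drift coefficient $a_{\lfloor k(n)\rfloor+\lfloor nt\rfloor}/a_{\lfloor k(n)\rfloor}$ decreases in $t$. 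The clean fix is: $S^{(k(n))}(\lfloor k(n)\rfloor+\lfloor nt\rfloor)\leq 0$ implies $M_1^{(k,n)}(t)\leq -a_{\lfloor k(n)\rfloor}S^{(k(n))}(\lfloor k(n)\rfloor)$, and the right-hand side is a fixed negative multiple of $\sqrt{n^{3-4p}/(3-4p)}$, so indeed $\inf_{0\leq t\leq\epsilon}N_1^{(k,n)}(t)\leq -1$ (say) for all large $n$. Plugging $\lambda=1$ into Doob gives the bound $C\epsilon^{3-4p}$, which goes to $0$ as $\epsilon\to 0$, uniformly in $n$.

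The main obstacle I anticipate is not the martingale estimate itself but the uniformity over \emph{all} $n\in\mathbb{N}$: Doob's inequality gives a bound with a constant depending on $\mathbb{E}_{k(n)}[[N_1^{(k,n)},N_1^{(k,n)}](\epsilon)]$, and Lemma \ref{part 2} only gives the asymptotics of this quantity as $n\to\infty$, not a uniform bound. One must therefore separately check that for each fixed $n$ the quadratic variation at time $\epsilon$ is finite (it is, being a finite sum of bounded jumps — indeed $[N_1^{(k,n)},N_1^{(k,n)}](\epsilon)\leq \lfloor n\epsilon\rfloor\cdot (2a_{\lfloor k(n)\rfloor+\lfloor n\epsilon\rfloor+1})^2/(n^{3-4p}/(3-4p))$ by \eqref{martingale difference diffusive}), and that the drift lower bound $a_{\lfloor k(n)\rfloor}S^{(k(n))}(\lfloor k(n)\rfloor)\geq c\sqrt{n^{3-4p}}$ holds with a constant $c>0$ valid for every $n\geq 1$, not just asymptotically; shrinking $\epsilon$ if necessary and handling the finitely many small $n$ by hand (for which the probability is simply bounded by $1$ and $\epsilon^{3-4p}\to 0$ still wins once we take $\sup$ over a finite set and then $\epsilon\to 0$) closes this gap. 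A secondary subtlety is that $p$ may be $<1/2$, $=1/2$, or in $(1/2,3/4)$; but since we only ever need a lower bound on the drift at times $t\in[\epsilon/2,\epsilon]$ and $t^{2p-1}$ is bounded below there by $\min(\epsilon^{2p-1},(\epsilon/2)^{2p-1})>0$, the same argument covers all three cases simultaneously.
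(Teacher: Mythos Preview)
Your reduction is the same as the paper's: the event $\{\inf_{t\leq\epsilon} S^{(k(n))}(\lfloor k(n)\rfloor+\lfloor nt\rfloor)\leq 0\}$ is contained in $\{\inf_{t\leq\epsilon} M_1^{(k,n)}(t)\leq -a_{\lfloor k(n)\rfloor}\lfloor k(n)\rfloor\}$ (your ``clean fix'' is exactly the right observation and makes the earlier digression about $t^{2p-1}$ on $[\epsilon/2,\epsilon]$ unnecessary). The paper then applies the Azuma--Hoeffding inequality, using the deterministic increment bound \eqref{martingale difference diffusive}, to obtain the exponential estimate $\exp\bigl(-c\,\epsilon^{-(3-4p)}\bigr)$; you instead apply Doob's $L^2$ maximal inequality to obtain the polynomial estimate $C\epsilon^{3-4p}$. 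Both suffice. Note that to feed Doob you need a bound on $\mathbb{E}_{k(n)}\bigl[[N_1^{(k,n)},N_1^{(k,n)}](\epsilon)\bigr]$, not just the almost-sure asymptotics of Lemma~\ref{part 2}; the deterministic bound $\sum_{j=1}^{\lfloor n\epsilon\rfloor}(2a_{\lfloor k(n)\rfloor+j+1})^2$ you invoke does this, but it is precisely the denominator in the paper's Azuma--Hoeffding exponent, so at that point the two arguments are computationally equivalent.

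One point does need fixing: your handling of the finitely many small $n$ is wrong as written. Saying ``the probability is simply bounded by $1$'' and then taking a supremum over $n$ gives a bound of $1$, not something tending to $0$. The paper's argument here is: for each fixed $n$, once $\epsilon<1/n$ one has $\lfloor nt\rfloor=0$ for all $t\in[0,\epsilon]$, so the process is constant equal to $S^{(k(n))}(\lfloor k(n)\rfloor)=\lfloor k(n)\rfloor>0$ and the probability in question is exactly $0$. This is what ``by hand'' should mean.
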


\begin{proof}
   Fix $\epsilon>0$, and set
   \begin{equation*}
    A_{n,\epsilon}:= \left\{\inf_{0\leq t\leq \varepsilon}S^{(k(n))}\left(\lfloor{k(n)}\rfloor+\lfloor{nt}\rfloor\right)\leq 0\right\}
   \end{equation*}
   We first observe that $ A_{n,\epsilon}$ is empty whenever $\lfloor {n\epsilon}\rfloor < \lfloor {k(n)}\rfloor$. We now consider the nontrivial case. Since $a_j>0$ for every $j\in\mathbb{N}$, the definition (\ref{M_1^{(k,n)} martingale}) of $M_1^{(k,n)}$ yields
   \begin{align*}
   A_{n,\epsilon} &=\left\{\inf_{0\leq t\leq \epsilon} a_{\lfloor {k(n)}\rfloor+\lfloor {nt}\rfloor}S^{(k(n))}\left(\lfloor {k(n)}\rfloor+\lfloor {nt}\rfloor\right)\leq 0 \right\} =\left\{\sup_{0\leq t\leq \epsilon}\left(-M_1^{(k,n)}(t)\right)\geq a_{\lfloor{k(n)}\rfloor}\lfloor{k(n)}\rfloor\right\} 
   \end{align*}
   
    In this proof, we rely on the Azuma–Hoeffding inequality (e.g see {\cite[Chapter 3.1]{Concentrationinequalities}}). As an upper bound of the martingale difference of $M_1^{(k,n)}$ is given by (\ref{martingale difference diffusive}), the Azuma–Hoeffding inequality gives us for any $\epsilon>0$,
    \begin{align*}\label{Azuma diffusive}
        \mathbb{P}_{k(n)}\left(A_{n,\epsilon}\right)&\leq \exp\left(-\frac{1}{2}\frac{a_{\lfloor{k(n)}\rfloor}^2 \lfloor{k(n)}\rfloor^2}{\sum_{j=1}^{\lfloor{n\epsilon}\rfloor}\left(2a_{\lfloor{k(n)}\rfloor+j+1}\right)^2}\right)
    \end{align*}
    Together with (\ref{asymp coeff}) and (\ref{k(n) diffusive}), we conclude that for $n\rightarrow\infty$,
    \begin{equation*}
        \frac{a_{\lfloor{k(n)}\rfloor}^2 \lfloor{k(n)}\rfloor^2}{\sum_{j=1}^{\lfloor{n\epsilon}\rfloor}\left(a_{\lfloor{k(n)}\rfloor+j+1}\right)^2}\sim \frac{n^{3-4p}}{(n\epsilon)^{3-4p}}=\frac{1}{\epsilon^{3-4p}}
    \end{equation*}
    where we used $k(n)\ll n$. Thus, there exists $N_0\in\mathbb{N}$ such that for every $n\geq N_0$, we have 
    \begin{equation}\label{upper bound for Azuma}
        \mathbb{P}_{k(n)}\left(A_{n,\epsilon}\right)\leq \exp\left(-\frac{1}{4\epsilon^{3-4p}}\right)
    \end{equation}
    We emphasize that this choice of upper bound is independent of $n$, as shown in (\ref{upper bound for Azuma}).
    It remains to control the remaining finitely many integers $n<N_0$. Since this set is finite and $\lfloor{k(n)}\rfloor\geq 1$ for each $1\leq n<N_0$, the quantity
    \begin{equation*}
    \epsilon_0:=\frac{1}{2}\min_{1\leq n<N_0}\frac{\lfloor{k(n)}\rfloor}{n}
    \end{equation*}
    is strictly positive. Hence, whenever $0<\epsilon<\epsilon_0$, we have $\lfloor{ n\epsilon}\rfloor<\lfloor{k(n)}\rfloor\text{ for every }1\leq n<N_0$, and therefore the event $A_{n,\epsilon}$ is empty for all $n<N_0$.

   Together with (\ref{upper bound for Azuma}), we conclude that for every $0<\epsilon<\epsilon_0$ and all $n\in\mathbb{N}$,
   \begin{equation*}
   \sup_{n\in\mathbb N}\mathbb{P}_{k(n)}\left(A_{n,\epsilon}\right)\leq \exp\left(-\frac{1}{4\epsilon^{3-4p}}\right)
   \end{equation*}
   which tends to $0$ as $\epsilon\downarrow 0$. In this way, we recover uniform convergence.

\end{proof}

The role of the assumption $k(n)\ll n$ is that the training period is negligible compared with the time scale $n$ on which the return is observed. In the proof, this assumption enters when we estimate the denominator in the
Azuma-Hoeffding bound. If, instead, we have $k(n)\sim n$, then the training would occupy a positive fraction of the memory at the observation scale, and the present argument would no longer apply without a separate analysis.

We are now ready to examine the behavior of the first return time of the trained ERW, denoted as $T^{(k)}$,  which captures the speed at which the elephant forgets its training. 

\begin{proof}[Proof of Theorem \ref{first return time for original ERW}]
Recall the definition of the first return time of the trained ERW in (\ref{definition of return time diffusive})
    \begin{align*}
        T^{(k)}=\inf\{n\geq k: S^{(k)}(n)=0\}
    \end{align*}
With the reparametrization suggested by (\ref{critical training phase}), we can rewrite it as:
\begin{equation*}
    T^{(k)}=k+(3-4p)^{\frac{1}{3-4p}}k^{\frac{4-4p}{3-4p}}\inf\left\{t\geq 0: S^{(k)}\left(k+\left\lfloor{(3-4p)^{\frac{1}{3-4p}}k^{\frac{4-4p}{3-4p}}t}\right\rfloor\right)\leq 0\right\}
\end{equation*}
Set
\begin{equation*}
r(k):=(3-4p)^{1/(3-4p)}\,k^{(4-4p)/(3-4p)}
\end{equation*}
and define
\begin{equation*}
Z^{(k)}(t):=\frac{S^{(k)}\left(k+\lfloor{r(k)t}\rfloor\right)}{\sqrt{r(k)}}, \quad t\geq 0
\end{equation*}
Then
\begin{equation*}
\frac{T^{(k)}}{r(k)}=\frac{k}{r(k)}+\inf\left\{t\geq 0: Z^{(k)}(t)\leq 0\right\}
\end{equation*}
Since $r(k)\ll k$, it is enough to identify the limit of the hitting time on the right-hand side. 

For every $0<\epsilon<\ell<\infty$, we recall that Proposition \ref{functional convergence for rescaled and conditioned ERW} gives
\begin{equation*}
\left(Z^{(k)}(t)\right)_{t\in[\epsilon,\ell]}\xrightarrow[k\rightarrow\infty]{(d)}\left(Z(t)\right)_{t\in[\epsilon,\ell]}\quad\text{in }D([\epsilon,\ell])
\end{equation*}
where
\begin{equation*}
Z(t):=\hat B(t)+\frac{t^{2p-1}}{\sqrt{3-4p}}
\end{equation*}
Using the usual representation of the noise-reinforced Brownian motion, we may write
\begin{equation*}
Z(t)=\frac{t^{2p-1}}{\sqrt{3-4p}} \left(B\left(t^{3-4p}\right)+1\right),\quad t>0
\end{equation*}
for a standard Brownian motion $B$. Hence
\begin{equation}\label{return time for Z}
\inf\left\{t\geq 0: Z(t)\leq 0\right\}=\inf\left\{t\geq 0: B\left(t^{3-4p}\right)+1\leq 0\right\}=\tau^{1/(3-4p)}
\end{equation}
where
\begin{equation*}
\tau:=\inf\left\{u\geq 0: B(u)+1=0\right\}>0, \text{\, almost surely}
\end{equation*}
because of the continuity property of Brownian motion.

Now define
\begin{equation*}
H(f):=\inf\left\{t\geq 0:f(t)\leq 0\right\}, \quad H_{\epsilon}(f):=\inf\left\{t\geq \epsilon:f(t)\leq 0\right\}
\end{equation*}
For fixed $\epsilon>0$ and $\ell>\epsilon$, we pass from the convergence of
the processes to the convergence of the corresponding hitting times by applying
the continuous mapping theorem to the functional
\begin{equation*}
f\longmapsto H_{\epsilon}(f)\wedge \ell
\end{equation*}
It remains to verify that this functional is almost surely continuous at the limiting process $Z$.

Since the limiting process $Z$ has continuous paths, it is enough to verify that, whenever its first entrance into $(-\infty,0]$ occurs before time $\ell$, the path crosses the level $0$ rather than merely touching it. By the Brownian representation
\begin{equation*}
    Z(t)=\frac{t^{2p-1}}{\sqrt{3-4p}}\left(B\left(t^{3-4p}\right)+1\right)
\end{equation*}
and since the prefactor is strictly positive, zeros of $Z$ correspond to hits of the level $-1$ by Brownian motion. Moreover, $\mathbb P(Z(\epsilon)=0)=0$. On the event $\{Z(\epsilon)>0,\ H_\epsilon(Z)<\ell\}$, the strong Markov property of Brownian motion at the corresponding hitting time of the level $-1$ implies that the shifted process after this time is a Brownian motion starting from $0$. It is well known that such a Brownian motion enters the negative half-line immediately almost surely. Therefore $Z$ crosses below $0$ immediately after its first entrance into $(-\infty,0]$. On the event $\{Z(\epsilon)<0\}$, we have $H_\epsilon(Z)=\epsilon$, so continuity is immediate. If $H_\epsilon(Z)\geq \ell$, the truncation by $\ell$ also makes the continuity immediate. Hence, $f\mapsto H_\epsilon(f)\wedge\ell$ is almost surely continuous at $Z$. The continuous mapping theorem gives 
\begin{equation*}
H_{\epsilon}(Z^{(k)})\wedge \ell \xrightarrow[k\rightarrow\infty]{(d)} H_{\epsilon}(Z)\wedge \ell
\end{equation*}
Since $H_{\epsilon}(Z)<\infty$ almost surely, letting $\ell\to\infty$ yields
\begin{equation}\label{H_epsilon(Z^{(k)}) converges to H_{epsilon}(Z)}
H_\epsilon(Z^{(k)})\xrightarrow[k\rightarrow\infty]{(d)} H_{\epsilon}(Z)\quad\text{for every }\epsilon>0
\end{equation}
It remains to let $\epsilon\downarrow 0$. By Lemma \ref{unlikely to return zero}, it gives us
\begin{equation}\label{equiv of lemma 3.1}
\lim_{\epsilon\downarrow 0}\limsup_{k\to\infty}\mathbb{P}\left(H(Z^{(k)})\leq \epsilon\right)=0
\end{equation}
On the other hand, by (\ref{return time for Z})
\begin{equation*}
H(Z)=\tau^{1/(3-4p)}>0
\qquad\text{almost surely}
\end{equation*}
therefore,
\begin{equation}\label{H_{epsilon}(Z) converges to H(Z)}
H_{\epsilon}(Z)\rightarrow H(Z)
\qquad\text{almost surely as }\epsilon\rightarrow 0
\end{equation}
Let $\varphi:\mathbb{R}_+\to\mathbb{R}$ be any bounded and continuous function. Then, for every $\epsilon>0$, by triangle inequality 
\begin{align}\label{Z^{(k)} converges to Z}
\left|\mathbb{E}\left(\varphi\left(H(Z^{(k)})\right)\right)-
\mathbb{E}\left(\varphi\left(H(Z)\right)\right)\right|
&\leq2\|\varphi\|_\infty\mathbb{P}\left(H(Z^{(k)})\leq \epsilon\right) \nonumber\\
&+\left|\mathbb{E}\left(\varphi\left(H_\epsilon(Z^{(k)})\right)\right)
-\mathbb{E}\left(\varphi\left(H_\epsilon(Z)\right)\right)
\right| \nonumber\\
&+\mathbb{E}\left(\left|\varphi\left(H_\epsilon(Z)\right)-\varphi\left(H(Z)\right)\right|\right)
\end{align}
Letting first $k\to\infty$, and then $\epsilon\downarrow 0$, we see that the quantity in (\ref{Z^{(k)} converges to Z}) tends to $0$. The first term in the upper bound goes to $0$ due to (\ref{equiv of lemma 3.1}), while the second and the third term goes to $0$ by (\ref{H_epsilon(Z^{(k)}) converges to H_{epsilon}(Z)}) and (\ref{H_{epsilon}(Z) converges to H(Z)}), respectively. We obtain
\begin{equation*}
H(Z^{(k)})\xrightarrow[k\rightarrow\infty]{(d)}H(Z)=\tau^{1/(3-4p)}
\end{equation*}
Therefore,
\begin{equation*}
\frac{T^{(k)}}{r(k)}=\frac{k}{r(k)}+H(Z^{(k)})\xrightarrow[k\rightarrow\infty]{(d)}\tau^{1/(3-4p)}
\end{equation*}
Equivalently,
\begin{equation*}
k^{-(4-4p)/(3-4p)}T^{(k)}\xrightarrow[k\rightarrow\infty]{(d)}(3-4p)^{1/(3-4p)}\,\tau^{1/(3-4p)}
\end{equation*}
which proves the theorem.
\end{proof}

For the case where the initial training brings the elephant to a position of order $ck$ at time $k$, with $c \in (0,1)$, the analysis proceeds similarly to the case discussed above and is therefore omitted.

\section{The trained ERW in the critical regime}\label{critical regime}

We now turn to the critical regime, where the memory parameter is fixed at $p=3/4$. We present results without proof when they mirror those established in the diffusive case. As before, our focus remains on the long-term behavior of the trained process $(S^{(k)}(n))_{n\geq 0}$, specifically its asymptotic behavior when $n \gg k$.

The martingale construction is the same in spirit. In this context, we introduce a sequence of scaling factors defined for every $j\in\mathbb{N}$,
\begin{equation*}
    a_{j}:=\frac{\Gamma\left(j \right)}{\Gamma \left(j+1/2 \right)}
\end{equation*}

Moreover, applying Stirling’s formula and using the fact that $p=3/4$, we obtain the following asymptotic behavior for every $t> 0$
\begin{equation*}\label{asymp coeff critical}
    a_{j}\sim j^{-1/2} \text{\; as $j\rightarrow\infty$} \quad \text{together with} \quad
\sum_{1\leq j\leq \lfloor{n^t}\rfloor} a_j^2 \sim t\log n \text{\; as $n\rightarrow\infty$}
\end{equation*}
As a consequence, the natural time reparametrization becomes $\lfloor{n^t}\rfloor$ rather than $\lfloor{nt}\rfloor$. Then we define the process for $t\geq 0$
\begin{equation*}
    \left(M_2^{(k,n)}(t)\right)_{t\geq 0}:= \left(a_{\lfloor{k(n)}\rfloor+\lfloor{n^t}\rfloor}S^{(k(n))}\left(\lfloor{k(n)}\rfloor+\lfloor{n^t}\rfloor\right)-a_{\lfloor{k(n)}\rfloor+1}S^{(k(n))}\left(\lfloor{k(n)}\rfloor+1\right)\right)_{t\geq 0}
\end{equation*}

Similarly to the diffusive regime, we are interested in the long-term behavior of the above process. The present reparameterization suggests that we assume $k(n)\ll n^t$ for every $t>0$, which is equivalent to saying $\log k(n)\ll \log n$ as $n\to\infty$.

We next state without proof a result analogous to Lemma \ref{diffusive martingale} in the present context.
\begin{lemma}\label{critical martingale}
    We have the next properties for the process $(M_2^{(k,n)}(t))_{t\geq 0}$,
    \begin{enumerate}
        \item For every $n\in\mathbb{N}$, $(M_2^{(k,n)}(t))_{t\geq 0}$ is a $\mathbb{P}_{k(n)}$-martingale that starts from 0.
        \item For every $t\geq 0$, the quadratic variation of $(M_2^{(k,n)}(t))_{t\geq 0}$ behaves asymptotically as: 
        \begin{equation*}\label{critical quadratic variation 2}
            \left[M_2^{(k,n)}, M_2^{(k,n)}\right](t) \sim t\log n \quad \text{as $n\rightarrow\infty$}
        \end{equation*}
    \end{enumerate}
\end{lemma}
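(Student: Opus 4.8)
The plan is to follow the proof of Lemma \ref{diffusive martingale} essentially verbatim, setting $p=3/4$ and replacing the diffusive time change $t\mapsto nt$ by the critical one $t\mapsto n^t$.

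For the first assertion, I would begin by recording the algebraic identity that makes the coefficients $a_j=\Gamma(j)/\Gamma(j+1/2)$ the correct normalization in the critical regime: $a_{j+1}/a_j = j/(j+\tfrac12)$, which is precisely the reciprocal of the conditional drift factor of the critical ERW, namely $\mathbb{E}_{k(n)}\bigl[S^{(k(n))}(j+1)\mid\mathcal{F}_j\bigr]=\bigl(1+\tfrac{1}{2j}\bigr)S^{(k(n))}(j)=\tfrac{j+1/2}{j}S^{(k(n))}(j)$, where $\mathcal{F}_j$ denotes the natural filtration. Consequently $\bigl(a_j S^{(k(n))}(j)\bigr)_{j\ge\lfloor k(n)\rfloor+1}$ is a $\mathbb{P}_{k(n)}$-martingale; the case $k(n)=0$ is already established in \cite{martingale approach, countingzeros, CLT}, and the general case is identical after time $\lfloor k(n)\rfloor$. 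Subtracting the $\mathcal{F}_{\lfloor k(n)\rfloor+1}$-measurable constant $a_{\lfloor k(n)\rfloor+1}S^{(k(n))}(\lfloor k(n)\rfloor+1)$ and composing with the deterministic nondecreasing time change $t\mapsto\lfloor k(n)\rfloor+\lfloor n^t\rfloor$ preserves the martingale property and makes $M_2^{(k,n)}$ start from $0$.

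For the second assertion, I would expand the quadratic variation as a sum over the unit jumps of $\lfloor n^t\rfloor$ and factor $a_j$ out of each increment using $a_j-a_{j-1}=\tfrac{1-2p}{j-1}a_j$ at $p=3/4$, obtaining
\begin{equation*}
\bigl[M_2^{(k,n)},M_2^{(k,n)}\bigr](t)=\sum_{j=\lfloor k(n)\rfloor+2}^{\lfloor k(n)\rfloor+\lfloor n^t\rfloor}\bigl(a_j S^{(k(n))}(j)-a_{j-1}S^{(k(n))}(j-1)\bigr)^2=\sum_{j=\lfloor k(n)\rfloor+2}^{\lfloor k(n)\rfloor+\lfloor n^t\rfloor}a_j^2\Bigl(\pm1-\tfrac12\,\tfrac{S^{(k(n))}(j-1)}{j-1}\Bigr)^2 .
\end{equation*}
Since $\log k(n)\ll\log n$ forces $k(n)\ll n^t$ for every fixed $t>0$, the law of large numbers for the ERW (\cite[Corollary 1]{CLT}) gives $S^{(k(n))}(j-1)/(j-1)=o(1)$ over the summation range, so the squared bracket equals $1+o(1)$; combined with $a_j^2\sim j^{-1}$ from Stirling this yields $\bigl[M_2^{(k,n)},M_2^{(k,n)}\bigr](t)\sim\sum_{j=\lfloor k(n)\rfloor+2}^{\lfloor k(n)\rfloor+\lfloor n^t\rfloor}j^{-1}=\log\!\bigl(n^t/k(n)\bigr)+o(1)=t\log n-\log k(n)+o(1)\sim t\log n$, the last step again using $\log k(n)\ll\log n$.

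The only step requiring genuine care is the error control in this final chain of asymptotics: I would need to verify that the multiplicative errors in $a_j^2=j^{-1}(1+o(1))$ together with the additive drift correction $o(1)$, once summed against $j^{-1}$ over $j\in[\lfloor k(n)\rfloor,\lfloor k(n)\rfloor+\lfloor n^t\rfloor]$, contribute only $o(\log n)$, and that $\log k(n)=o(\log n)$ makes the lower endpoint of the resulting harmonic sum negligible. This is the exact counterpart of the penultimate-line estimate in the proof of Lemma \ref{diffusive martingale}; it is routine, but it is the one place where the critical time change $t\mapsto n^t$ — which turns the polynomial growth $n^{3-4p}$ of the diffusive quadratic variation into logarithmic growth $\log n$ — genuinely enters.
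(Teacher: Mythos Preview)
Your proposal is correct and is precisely the argument the paper has in mind: the paper explicitly states that Lemma \ref{critical martingale} is presented ``without proof when they mirror those established in the diffusive case,'' and your adaptation of the proof of Lemma \ref{diffusive martingale} --- substituting $p=3/4$, replacing $nt$ by $n^t$, and reducing the quadratic variation to a harmonic sum via $a_j^2\sim j^{-1}$ --- is exactly that mirror. The one point you flag as needing care (that $\log k(n)=o(\log n)$ absorbs the lower endpoint of the harmonic sum) is indeed the only place where the critical hypothesis $\log k(n)\ll\log n$ is used, and your treatment of it is sound.
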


The martingale property in Lemma \ref{critical martingale} is obtained in the same way as before. The essential difference lies in the quadratic variation: in the diffusive regime, one sums terms of order $j^{2-4p}$, which produces a power of $n$, whereas at criticality one has $a_j^2\sim j^{-1}$. Together with the assumption $\log k(n)\ll \log n$, the corresponding sum grows like $\log n$.

It is worth pointing out that by using the same martingale functional central limit theorem (FCLT) approach, we find that the critical training phase now becomes $\log n$. Moreover, the first return time becomes exponential rather than polynomial in the training length. Thus, the arguments below are parallel to those of Sections \ref{section:preliminaries} and \ref{main proof}, but the power law estimates used there must be replaced throughout by logarithmic ones.

\begin{proposition}\label{functional convergence for rescaled and conditioned ERW 2}
    In the critical regime, $p=3/4$, if $k(n)\sim \log n$ as $n\rightarrow\infty$, then the following weak convergence holds in the Skorokhod topology: for each pair of $0<\epsilon\leq l <\infty$,
\begin{equation}\label{ERW scaling limit diffusive 2}
    \left(\frac{S^{(k(n))}\left(\lfloor{n^t}\rfloor\right)}{\sqrt{n^t\log n}}\right)_{t\in[\epsilon,l] }\xrightarrow[n\rightarrow\infty]{(d)} \left(B(t)+1\right)_{t\in[\epsilon,l] }
\end{equation}
where $(B(t))_{t\geq 0}$ is the standard Brownian motion. We emphasize that the convergence in (\ref{ERW scaling limit diffusive 2}) is valid for any positive compact interval bounded away from $0$.
\end{proposition}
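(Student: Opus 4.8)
The plan is to mirror the proof of Proposition \ref{functional convergence for rescaled and conditioned ERW} in the critical setting, using Lemma \ref{critical martingale} in place of Lemma \ref{diffusive martingale}. First I would introduce the normalized martingale
\begin{equation*}
    N_2^{(k,n)}(t):=\frac{M_2^{(k,n)}(t)}{\sqrt{\log n}},\qquad t\geq 0,
\end{equation*}
which starts from $0$. By Lemma \ref{critical martingale}, its quadratic variation satisfies $[N_2^{(k,n)},N_2^{(k,n)}](t)\to t$ as $n\to\infty$, for every $t\geq 0$. To apply the martingale FCLT one also needs the conditional Lindeberg (negligible maximal jump) condition: since the ERW step is $\pm 1$ and $|S^{(k(n))}(\lfloor k(n)\rfloor+\lfloor n^t\rfloor)|\leq \lfloor k(n)\rfloor+\lfloor n^t\rfloor$, the same computation as in the diffusive case gives $|\Delta M_2^{(k,n)}(t)|\leq 2a_{\lfloor k(n)\rfloor+\lfloor n^t\rfloor+1}$ almost surely; combined with $a_j\sim j^{-1/2}$ and the growth $\lfloor n^t\rfloor\to\infty$, the rescaled jump $\Delta N_2^{(k,n)}(t)=\Delta M_2^{(k,n)}(t)/\sqrt{\log n}$ is of order $(n^t\log n)^{-1/2}\to 0$, uniformly on compacts. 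Hence the martingale FCLT yields
\begin{equation}\label{prelim convergence critical}
    \left(N_2^{(k,n)}(t)\right)_{t\geq 0}\xrightarrow[n\rightarrow\infty]{(d)}\left(B(t)\right)_{t\geq 0}
\end{equation}
in the Skorokhod topology, where $B$ is standard Brownian motion.

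Next I would unwind the definition of $M_2^{(k,n)}$ to convert \eqref{prelim convergence critical} into a statement about $S^{(k(n))}$ itself. Writing out the martingale and using $a_j\sim j^{-1/2}$ together with $\log k(n)\ll\log n$ (so that $\lfloor k(n)\rfloor+\lfloor n^t\rfloor\sim n^t$), the leading term is $a_{\lfloor k(n)\rfloor+\lfloor n^t\rfloor}S^{(k(n))}(\lfloor k(n)\rfloor+\lfloor n^t\rfloor)\sim (n^t)^{-1/2}S^{(k(n))}(\lfloor n^t\rfloor)$, and the subtracted initial term $a_{\lfloor k(n)\rfloor+1}S^{(k(n))}(\lfloor k(n)\rfloor+1)$ is of order $k(n)^{-1/2}\cdot k(n)=\sqrt{k(n)}\sim\sqrt{\log n}$ in the non-degenerate case $k(n)\sim\log n$. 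Dividing by $\sqrt{\log n}$ then gives
\begin{equation*}
    \frac{S^{(k(n))}(\lfloor n^t\rfloor)}{\sqrt{n^t\log n}}-\frac{\sqrt{k(n)}}{\sqrt{\log n}}\xrightarrow[n\rightarrow\infty]{(d)} B(t),
\end{equation*}
and since $k(n)/\log n\to 1$ the shift converges to $1$, which is exactly \eqref{ERW scaling limit diffusive 2}. As in the diffusive proof, one should also absorb the discrepancy between evaluating at $\lfloor k(n)\rfloor+\lfloor n^t\rfloor$ and at $\lfloor n^t\rfloor$; here this is even easier, since $\lfloor k(n)\rfloor=o(n^t)$ for every fixed $t>0$ and the map $u\mapsto u^{-1/2}S(u)$ has small increments over a window of length $o(n^t)$ once we have the FCLT — alternatively one notes $\lfloor n^t\rfloor=\lfloor k(n)\rfloor+\lfloor n^{t'}\rfloor$ for $t'=t+o(1)$ and invokes continuity of $B$. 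Restricting to $t\in[\epsilon,l]$ with $0<\epsilon\leq l<\infty$ guarantees $n^t\to\infty$ uniformly, which is what makes the $\pm 1$ step size negligible after rescaling.

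The law-of-large-numbers input I would need is the critical-regime analogue of the bound used in Lemma \ref{diffusive martingale}: that $S^{(k(n))}(\lfloor u\rfloor-1)/(\lfloor u\rfloor-1)=o(1)$ for the relevant range of $u$, so that the quadratic variation asymptotics in Lemma \ref{critical martingale} indeed hold under $\mathbb{P}_{k(n)}$ with $k(n)\sim\log n$; this is where the assumption $\log k(n)\ll\log n$ really enters, ensuring the conditioning on the first $k(n)$ steps does not disturb the $n\to\infty$ behavior of the quadratic variation. The main obstacle, as in the diffusive case, is not the FCLT itself but justifying that the Skorokhod convergence on $[\epsilon,l]$ genuinely transfers from the exponential time scale $t\mapsto n^t$ to the process $S^{(k(n))}$ with the correct normalization $\sqrt{n^t\log n}$ — i.e., controlling the two approximations (replacing $a_j$ by $j^{-1/2}$ and replacing the argument $\lfloor k(n)\rfloor+\lfloor n^t\rfloor$ by $\lfloor n^t\rfloor$) uniformly in $t$ over the compact interval; both are handled by the $\log k(n)\ll\log n$ hypothesis and continuity of the limit, exactly as in the proof of Proposition \ref{functional convergence for rescaled and conditioned ERW}.
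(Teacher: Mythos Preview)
Your proposal is correct and follows precisely the approach the paper indicates: the paper explicitly omits the proof of Proposition~\ref{functional convergence for rescaled and conditioned ERW 2}, stating that it ``mirrors'' the diffusive argument, and you have carried out that mirroring with the right substitutions (exponential time scale $t\mapsto n^t$, normalization by $\sqrt{\log n}$, critical-regime asymptotics $a_j\sim j^{-1/2}$, and Lemma~\ref{critical martingale} in place of Lemma~\ref{diffusive martingale}). The jump bound, the FCLT step, the identification of the drift $+1$ from $a_{\lfloor k(n)\rfloor+1}S^{(k(n))}(\lfloor k(n)\rfloor+1)\sim\sqrt{k(n)}\sim\sqrt{\log n}$, and the absorption of the shift $\lfloor k(n)\rfloor$ via $\log k(n)\ll\log n$ are all handled as in the paper's diffusive proof.
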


If $k(n)\ll \log n$, i.e., the initial training is negligible compared to the critical training phase in the critical regime. In that case, the deterministic effect of the training becomes negligible compared with the leading normalization $\sqrt{n^t\log n}$. As a result, the limiting process in (\ref{ERW scaling limit diffusive 2}) is simply a standard Brownian motion.

As in the diffusive case, we now study the first return time of the trained ERW in the critical regime. Recall that the first return time of the trained ERW in the critical regime is defined as, for every $k\geq 0$:
\begin{align*}
T^{(k)} := \inf\{n \geq k : S^{(k)}(n) = 0\}
\end{align*}
Due to the time reparametrization introduced in Proposition \ref{functional convergence for rescaled and conditioned ERW 2}, we can equivalently express it as
\begin{align*}
T^{(k(n))} = \lfloor{k(n)}\rfloor + \lfloor{n^ {\inf\left\{ t\geq 0 : S^{(k(n))}\left(\lfloor{k(n)}\rfloor + \lfloor{n^t}\rfloor\right) = 0 \right\}}}\rfloor
\end{align*}

To pass from Proposition \ref{functional convergence for rescaled and conditioned ERW 2} to Theorem \ref{first return time for original ERW 2}, one needs the critical counterpart of Lemma \ref{unlikely to return zero}. The argument is based on the same Azuma--Hoeffding localization as in the diffusive regime, but the relevant estimates are again logarithmic rather than polynomial. More precisely, when $k(n)\sim \log n$, the initial weighted displacement $a_{\lfloor{k(n)}\rfloor}\lfloor{k(n)}\rfloor$ is of order $\sqrt{\log n}$, while the quadratic variation accumulated up to time $\lfloor{n^{\epsilon}}\rfloor$ is of order $\epsilon\log n$. Hence, the probability of a return on a sub-exponential time scale is uniformly small as $\epsilon\downarrow 0$. Once this critical no-fast-return estimate is established, the stopping-time argument used in the proof of Theorem \ref{first return time for original ERW} carries over without further change.

\begin{theorem} \label{first return time for original ERW 2}
In the critical regime, $p=3/4$, we have 
\begin{equation}\label{intro: return time diffusive 2}
   \frac{\log T^{(k)}}{k} \xrightarrow[k\rightarrow\infty]{(d)} \tau\
\end{equation}
where $\tau$ is a $\text{Stable}(1/2)$-distributed random variable with density 
\begin{equation*}
    \frac{1}{\sqrt{2\pi t^{3}}}\exp{\left(-\frac{1}{2t}\right)} \quad \text{for $t>0$.}
\end{equation*} 
\end{theorem}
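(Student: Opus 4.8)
The plan is to mirror the proof of Theorem \ref{first return time for original ERW}, transporting each ingredient to the logarithmic time scale dictated by Proposition \ref{functional convergence for rescaled and conditioned ERW 2}. Recall the reparametrized expression
\begin{equation*}
T^{(k(n))} = \lfloor k(n) \rfloor + n^{\inf\left\{ t \geq 0 : S^{(k(n))}\left( \lfloor k(n) \rfloor + \lfloor n^t \rfloor \right) = 0 \right\}},
\end{equation*}
and set $k = k(n) \sim \log n$, equivalently $n \sim e^{k}$. Taking logarithms, $\log T^{(k(n))} \sim_{\mathbb{P}_{k(n)}} k \cdot \inf\{t \geq 0 : S^{(k(n))}(\lfloor k(n)\rfloor + \lfloor n^t\rfloor) = 0\}$, since the additive term $\lfloor k(n)\rfloor$ is negligible once the infimum is bounded away from $0$. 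So everything reduces to showing that the first zero of the rescaled process $(S^{(k(n))}(\lfloor k(n)\rfloor + \lfloor n^t\rfloor))_{t \geq 0}$ converges in distribution to the first zero of the limiting process $(B(t) + 1)_{t \geq 0}$ from \eqref{ERW scaling limit diffusive 2}, and that the first zero of $B(t)+1$ is precisely $\mathrm{Stable}(1/2)$-distributed.

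First I would record the last fact: if $(B(t))_{t\geq 0}$ is standard Brownian motion, then $\inf\{t \geq 0 : B(t) = -1\}$ is the first passage time of Brownian motion to level $-1$, which is well known to be $\mathrm{Stable}(1/2)$ with density $\tfrac{1}{\sqrt{2\pi t^3}}\exp(-1/(2t))$; this matches the $\tau$ in the statement. Second, I would prove the critical-regime analogue of Lemma \ref{unlikely to return zero}: uniformly in $n$, the probability that $\inf_{0\leq t\leq\epsilon} S^{(k(n))}(\lfloor k(n)\rfloor + \lfloor n^t\rfloor) \leq 0$ tends to $0$ as $\epsilon \to 0$. As in the diffusive case this follows from Azuma–Hoeffding applied to $M_2^{(k,n)}$: the martingale differences are bounded by $2a_{\lfloor k(n)\rfloor + j + 1} \sim 2j^{-1/2}$, so $\sum_{j=1}^{\lfloor n^\epsilon\rfloor}(2a_{\ldots})^2 \sim 4\log(n^\epsilon) = 4\epsilon\log n$, while the numerator $a_{\lfloor k(n)\rfloor}^2\lfloor k(n)\rfloor^2 \sim k(n) \sim \log n$; the ratio behaves like $1/(4\epsilon)$, independent of $n$ for large $n$, and diverges as $\epsilon\to 0$, killing the exponential bound. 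The finitely many small $n$ are handled exactly as in Lemma \ref{unlikely to return zero}, noting $S^{(k(n))}(\lfloor k(n)\rfloor) = \lfloor k(n)\rfloor > 0$ and $\lfloor n^t\rfloor = 0$ for $t$ small.

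With this no-immediate-return estimate in hand, the argument from the proof of Theorem \ref{first return time for original ERW} transfers verbatim: for any deterministic $t_k = o(1)$ one shows $\mathbb{P}_k(\text{first zero occurs at reparametrized time} \leq t_k) \to 0$, so the reparametrized first-hitting time is bounded away from $0$ with high probability, and hence $\lfloor k(n)\rfloor$ truly is negligible in the logarithm. Then I would invoke Proposition \ref{functional convergence for rescaled and conditioned ERW 2} together with a continuous-mapping / stopping-time-continuity argument analogous to Lemma 3.2 in \cite{critical ERW} to pass the first-zero functional through the weak limit, obtaining $\inf\{t\geq 0 : S^{(k(n))}(\lfloor k(n)\rfloor + \lfloor n^t\rfloor) = 0\} \xrightarrow{(d)} \inf\{t \geq 0 : B(t) + 1 = 0\} = \tau$. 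Combining with $\log T^{(k)} \sim_{\mathbb{P}} k \cdot (\text{that infimum})$ gives \eqref{intro: return time diffusive 2}.

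The main obstacle is the passage from the functional convergence in \eqref{ERW scaling limit diffusive 2}, which only holds on compact intervals $[\epsilon, l]$ bounded away from $0$, to convergence of the first-hitting-time functional, which is not continuous on Skorokhod space in general. The resolution has two halves, exactly paralleling the diffusive proof: near $t = 0$, the critical-regime analogue of Lemma \ref{unlikely to return zero} rules out hitting $0$ before time $\epsilon$ uniformly in $n$, handling the left endpoint; and one must also rule out the process hiding near but not at zero (so that the hitting time is well-approximated) — here one uses that $B(t) + 1$ crosses level $0$ transversally a.s. at its first hitting time, so the first-hitting functional is a.s. continuous at the limiting path, which is the standard hypothesis under which hitting times pass to the limit. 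One also needs the upper tightness statement that the first zero does not escape to $+\infty$, i.e. that $\inf\{t : B(t)+1 = 0\} < \infty$ a.s., which holds by recurrence of Brownian motion; this justifies restricting to a compact $[\epsilon, l]$ with $l$ large. Assembling these pieces is routine given the diffusive template, so I would present the critical case briefly, emphasizing only the Azuma–Hoeffding recomputation on the logarithmic scale and the identification of the Brownian first passage law.
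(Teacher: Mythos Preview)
Your proposal is correct and follows exactly the approach the paper indicates: the paper does not give a separate proof in the critical case but simply states that the analogue of Lemma \ref{unlikely to return zero} holds and that the argument of Theorem \ref{first return time for original ERW} carries over. Your sketch fills in precisely the two points the paper leaves implicit---the Azuma--Hoeffding recomputation on the logarithmic scale (ratio $\sim 1/(4\epsilon)$, independent of $n$) and the identification of $\inf\{t\ge 0: B(t)+1=0\}$ with the $\mathrm{Stable}(1/2)$ law---so there is nothing to compare.
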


We now turn to the case where the elephant is overly trained, namely, the initial training satisfies $k(n)\gg \log n$, while still ensuring that $\log k(n)\ll \log n$. The next result mirrors Proposition \ref{convergence for k(n)>> critical training phase}.

\begin{proposition}\label{convergence for k(n)>> critical training phase 2}
    In the critical regime, $p=3/4$, if $k(n) \gg \log n$ as $n\rightarrow\infty$, we have the following weak convergence
    \begin{align*}
        \frac{S^{(k(n))}\left(n\right)-\sqrt{nk(n)}}{\sqrt{n\log n}}\xrightarrow[n\rightarrow\infty]{(d)}\mathcal{N}\left(0\, , 1\right)
    \end{align*}
\end{proposition}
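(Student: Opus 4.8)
## Proof Plan for Proposition \ref{convergence for k(n)>> critical training phase 2}

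The plan is to prove Proposition \ref{convergence for k(n)>> critical training phase 2} by mirroring the proof of Proposition \ref{convergence for k(n)>> critical training phase} step by step, replacing the diffusive martingale by $M_2^{(k,n)}$ and the scale $\sqrt{n^{3-4p}/(3-4p)}$ by $\sqrt{\log n}$. First I would set $N_2^{(k,n)}(t):=M_2^{(k,n)}(t)/\sqrt{\log n}$, a $\mathbb{P}_{k(n)}$-martingale started from $0$, and observe that the martingale FCLT applies under the sole standing hypothesis $\log k(n)\ll\log n$ — hence in particular for an overtrained $k(n)\gg\log n$ — since its two inputs are insensitive to the precise size of $k(n)$: Lemma \ref{critical martingale} gives $[N_2^{(k,n)},N_2^{(k,n)}](t)\xrightarrow[n\to\infty]{(d)}t$, while the jump bound $|\Delta M_2^{(k,n)}|\le 2a_{\lfloor k(n)\rfloor+j+1}$ together with $a_j\sim j^{-1/2}$ forces the maximal jump of $N_2^{(k,n)}$ to be $O\bigl(k(n)^{-1/2}/\sqrt{\log n}\bigr)\to 0$. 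Thus $N_2^{(k,n)}\xrightarrow[n\to\infty]{(d)}B$ in the Skorokhod topology on every compact subinterval of $(0,\infty)$, the critical-regime analogue of (\ref{prelim convergence diffusive}) and the martingale underpinning of Proposition \ref{functional convergence for rescaled and conditioned ERW 2}.

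Next I would write this out explicitly. Since $S^{(k(n))}(\lfloor k(n)\rfloor+1)=\lfloor k(n)\rfloor+X_{\lfloor k(n)\rfloor+1}$ and $a_{\lfloor k(n)\rfloor+1}\lfloor k(n)\rfloor=\sqrt{k(n)}\,(1+O(1/k(n)))$ by Stirling's formula, the subtracted initial term of $M_2^{(k,n)}$ satisfies $a_{\lfloor k(n)\rfloor+1}S^{(k(n))}(\lfloor k(n)\rfloor+1)=\sqrt{k(n)}+O(k(n)^{-1/2})$, which differs from $\sqrt{k(n)}$ by $o(\sqrt{\log n})$. Adding this term back to $M_2^{(k,n)}$ and dividing by $\sqrt{\log n}$, Slutsky's lemma turns the convergence of $N_2^{(k,n)}$ into
\begin{equation*}
\left(\frac{a_{\lfloor k(n)\rfloor+\lfloor n^t\rfloor}\,S^{(k(n))}\!\left(\lfloor k(n)\rfloor+\lfloor n^t\rfloor\right)-\sqrt{k(n)}}{\sqrt{\log n}}\right)_{t}\xrightarrow[n\to\infty]{(d)}\bigl(B(t)\bigr)_{t}
\end{equation*}
in the Skorokhod topology on compact subintervals of $(0,\infty)$.

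It remains to land the process exactly at the deterministic time $n$. In the diffusive case this amounted to "setting $t=1$" (there the reparametrization is linear in $t$); here the reparametrization is $\lfloor k(n)\rfloor+\lfloor n^t\rfloor$, so I would instead evaluate at $t_n:=\log\!\bigl(n-\lfloor k(n)\rfloor\bigr)/\log n$, chosen so that $\lfloor k(n)\rfloor+\lfloor n^{t_n}\rfloor=n$ exactly, and note $t_n\to 1$ since $k(n)=n^{o(1)}\ll n$. As Brownian motion is almost surely continuous at $1$, the standard fact that, whenever $X_n\xrightarrow{(d)}X$ in the Skorokhod topology with $X$ almost surely continuous and $t_n\to t_0$, one has $X_n(t_n)\xrightarrow{(d)}X(t_0)$, yields
\begin{equation*}
\frac{a_n\,S^{(k(n))}(n)-\sqrt{k(n)}}{\sqrt{\log n}}\xrightarrow[n\to\infty]{(d)}B(1)\sim\mathcal{N}(0,1).
\end{equation*}
Replacing $a_n$ by $n^{-1/2}$ costs an error $(a_n-n^{-1/2})S^{(k(n))}(n)/\sqrt{\log n}=O\bigl(n^{-1/2}/\sqrt{\log n}\bigr)\to 0$ (using $a_n-n^{-1/2}=O(n^{-3/2})$ and $|S^{(k(n))}(n)|\le n$), and multiplying numerator and denominator by $\sqrt n$ gives the desired
\begin{equation*}
\frac{S^{(k(n))}(n)-\sqrt{n\,k(n)}}{\sqrt{n\log n}}\xrightarrow[n\to\infty]{(d)}\mathcal{N}(0,1).
\end{equation*}

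I expect the main obstacle to be precisely this last passage: because the scaling is exponential in $t$, one cannot simply "set $t=1$", and must instead work at a parametrization $t_n\to 1$, combining the continuity of the Brownian limit with the asymptotic equicontinuity supplied by the FCLT — harmless, but the one point deserving care, and the exact analogue of the role of (\ref{equivalent result}) in the diffusive argument. A secondary verification is that the quadratic-variation asymptotics of Lemma \ref{critical martingale} and the jump bound hold throughout the range $\log k(n)\ll\log n$, so that the FCLT may legitimately be invoked for an overtrained elephant.
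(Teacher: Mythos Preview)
Your proposal is correct and is precisely the mirroring of the diffusive argument that the paper itself invokes (the paper omits a proof of this proposition, declaring only that it ``mirrors Proposition \ref{convergence for k(n)>> critical training phase}''). Your one added piece of care --- evaluating at the data-dependent time $t_n=\log(n-\lfloor k(n)\rfloor)/\log n\to 1$ and invoking almost-sure continuity of $B$ to pass to the limit --- is the natural critical-regime substitute for the diffusive step ``apply (\ref{equivalent result}) and set $t=1$'', and you have identified it correctly as the only point requiring attention.
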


As in the diffusive regime, we leave the general case to the reader, since it follows closely from the results presented above.

\section*{Acknowledgement} 

I would like to thank the two anonymous referees for their careful reading of the manuscript and for their valuable suggestions. Their detailed comments helped improve the exposition, in particular its conceptual motivation, the discussion of its relation to persistence exponents, and the clarity of several proofs. I am also deeply grateful to Jean Bertoin for many stimulating discussions and for sharing his insights, which greatly influenced the ideas and results presented in this paper.

\noindent 

\begin{thebibliography}{99}


\bibitem{Persistenceprobabilitiesandexponents}
{Frank Aurzada and Thomas Simon}, Persistence probabilities and exponents. L\'evy matters. {V}, 183--224, Lecture Notes in Math. \emph{Springer, Cham} (2015). \MR{3468226}


\bibitem{Persistenceandfirst-passageproperties}
{Alan J. Bray, Satya N. Majumdar and Grégory Schehr}, Persistence and first-passage properties in nonequilibrium systems. \emph{Advances in Physics.} \textbf{62(3)} (2013), 225–361. 




\bibitem{connectiontourns}
{Erich Baur, Jean Bertoin}, Elephant random walks and their connection to Pólya-type urns. \emph{ Phys. Rev. E} \textbf{94} (2016), 052134.






\bibitem{martingaleapproach}
{Bernard Bercu}, A martingale approach for the elephant random walk. \emph{ J. Phys. A} \textbf{51} (2018), 015201, 16. \MR{3741953}




\bibitem{Concentrationinequalities}
{Bernard Bercu, Bernard Delyon and Emmanuel Rio}, Concentration inequalities for sums and martingales. SpringerBriefs in Mathematics, \emph{Springer, Cham} (2015), x+120pp. \MR{3363542}





\bibitem{universality}
{Jean Bertoin}, Universality of noise reinforced {B}rownian motions. In and out of equilibrium 3. {C}elebrating {V}ladas
              {S}idoravicius, \emph{Progr. Probab.} \textbf{77} Birkh\"auser/Springer, Cham (2021), 147--161. \MR{4237267} 




\bibitem{countingzeros}
{Jean Bertoin}, Counting the zeros of an elephant random walk. \emph{ Trans. Amer. Math. Soc.} \textbf{375} (2022), 5539-5560. \MR{4469228}




\bibitem{CLT}
{Cristian F. Coletti, Renato Gava and Gunter M. Schütz}, Central limit theorem and related results for the elephant random walk. \emph{ J. Math. Phys. } \textbf{58} (2017), 053303, 8. \MR{3652225}




\bibitem{invariance}
{Cristian F. Coletti, Renato Gava and Gunter M. Schütz}, A strong invariance principle for the elephant random walk. \emph{ J. Stat. Mech. Theory Exp.} \textbf{12} (2017), 123207, 8. \MR{3748931}



\bibitem{recurrencefor1dERW}
{Cristian F. Coletti, Ioannis Papageorgiou}, Asymptotic analysis of the elephant random walk. \emph{ J. Stat. Mech. Theory Exp.} \textbf{1} (2021), Paper No. 013205, 12. \MR{4248774}




\bibitem{NicolasCurien}
{Nicolas Curien, Lucile Laulin}, Recurrence of the plane Elephant random walk. \emph{ C. R. Math. Acad. Sci. Paris} \textbf{362} (2024), 1183--1188. \MR{4824917}



\bibitem{ChrisDean}
{Christopher B. C. Dean}, Functional limit theorems for Pólya urns with growing initial compositions. (2022), \href{https://arxiv.org/abs/2206.05138}{arXiv: 2206.05138}





\bibitem{criticalERW}
{Zheng Fang}, How often does a critical elephant random walk return to origin. \emph{ Electron. Commun. Probab.} \textbf{29} (2024), Paper No. 64, 12. \MR{4814245}



\bibitem{fixpoint}
{Hélène Guérin, Lucile Laulin and Kilian Raschel}, A fixed-point equation approach for the superdiffusive elephant random walk. \emph{ To appear in Annales de l’IHP - Probabilités et Statistiques.} (2024)




\bibitem{superdiffusivelaw}
{Hélène Guérin, Lucile Laulin, Kilian Raschel and Thomas Simon}, On the limit law of the superdiffusive elephant random walk. \emph{ Electron. J. Probab.} \textbf{30} (2025), Paper No. 102, 1–25. \MR{4926011}



\bibitem{correlatedBernoulli}
{Christopher C. Heyde}, Asymptotics and criticality for a correlated {B}ernoulli process. Festschrift in honour of Daryl Daley. \emph{ Aust. N. Z. J. Stat.} \textbf{46} (2004), No. 1, 53-57. \MR{2060952}





\bibitem{superdiffusiveERW}
{Naoki Kubota, Masato Takei}, Gaussian fluctuation for superdiffusive elephant random walks. \emph{ J. Stat. Phys.} \textbf{177} (2019), No. 6, 1157--1171. \MR{4034803}



\bibitem{LucilePhDthesis}
{Lucile Laulin}, About the elephant random walk. \emph{Université de Bordeaux.} Ph. D. Thesis (2022).



\bibitem{Levin-Peres}
{David A. Levin, Yuval Peres}, Markov chains and mixing times. Second Edition. With contributions by Elizabeth L. Wilmer. With a chapter on "Coupling from the past" by James G. Propp and David B. Wilson. \emph{ American Mathematical Society, Providence, RI }, 2017, xvi+447. \MR{3726904}


\bibitem{ShuoQin}
{Shuo Qin}, Recurrence and transience of multidimensional elephant random walks. \emph{Ann. Probab.} \textbf{53} (2025), no. 3, 1049–1078. \MR{4908097}





\bibitem{elephantsalwaysremember}
{Gunter M. Schütz, Steffen Trimper}, Elephants can always remember: Exact long-range memory effects in a non-Markovian random walk,  \emph{ Phys. Rev. E} \textbf{70} (2004), 045101(R).



\bibitem{martingaleFCLT}
{Ward Whitt}, Proofs of the martingale {FCLT},  \emph{ Probab. Surv.} \textbf{4} (2007), 268--302, \MR{2368952}.















\end{thebibliography}

\end{document}